\documentclass[]{article}
\usepackage{amsmath,amsthm,amssymb,amsfonts,mathtools,enumerate,enumitem,setspace}
\usepackage[normalem]{ulem}
\usepackage[round]{natbib} 

\newcommand\norm[1]{\left\lVert#1\right\rVert}

\theoremstyle{plain}
\newtheorem{theorem}{Theorem}

\theoremstyle{definition}
\newtheorem{example}{Example}

\theoremstyle{remark}
\newtheorem{remark}{Remark}

\newtheoremstyle{mystyle}%
{0pt}
{0pt}
{}
{}
{\bfseries}
{.}
{.5em}
{}
\theoremstyle{mystyle}
\newtheorem{assumption}{\textbf{A}\ignorespaces}
\newenvironment{Qtheorem}[1][]
  {\quote\begin{assumption}[#1]}
  {\end{assumption}\ \endquote}

\usepackage{hyperref}
\hypersetup{
    colorlinks,
    citecolor=black,
    filecolor=red,
    linkcolor=black,
    urlcolor=red
}
\usepackage{indentfirst}
\numberwithin{equation}{section}
\allowdisplaybreaks

\newcommand{\R}{\mathbb R}

\def\assumptionautorefname{}

\makeatletter
\renewcommand{\assumptionautorefname}{A\@gobble}
\makeatother

\setlist{nolistsep}

\setenumerate[1]{label=\Roman*.}
\setenumerate[2]{label=\Alph*.}
\setenumerate[3]{label=\roman*.}
\setenumerate[4]{label=\alph*.}

\begin{document}
\date{}
\title{On Convergence of General Truncation-Augmentation Schemes for Approximating Stationary Distributions of Markov Chains}
\author{Alex Infanger\thanks{Institute for Computational \& Mathematical Engineering, Stanford University, USA.} \and Peter W. Glynn\footnotemark[1]\ \thanks{Department of Management Science \& Engineering, Stanford University, USA.} \and Yuanyuan Liu\thanks{School of Mathematics and Statistics,
Central South University, China.}}

\maketitle

\begin{abstract} \noindent
In the analysis of Markov chains and processes, it is sometimes convenient to replace an unbounded state space with a ``truncated'' bounded state space. When such a replacement is made, one often wants to know whether the equilibrium behavior of the truncated chain or process is close to that of the untruncated system. For example, such questions arise naturally when considering numerical methods for computing stationary distributions on unbounded state space. In this paper, we study general truncation-augmentation schemes, in which the substochastic truncated ``northwest corner'' of the transition matrix or kernel is stochasticized (or augmented) arbitrarily. In the presence of a Lyapunov condition involving a coercive function, we show that such schemes are generally convergent in countable state space, provided that the truncation is chosen as a sublevel set of the Lyapunov function. For stochastically monotone Markov chains on $\mathbb Z_+$, we prove that we can always choose the truncation sets to be of the form $\{0,1,...,n\}$. We then provide sufficient conditions for weakly continuous Markov chains under which general truncation-augmentation schemes converge weakly in continuous state space. Finally, we briefly discuss the extension of the theory to continuous time Markov jump processes.\\

\noindent\emph{Key words}: Markov chains, stationary distributions, numerical methods, Lyapunov functions.
\end{abstract}
\section{Introduction}

Let  $X = (X_n: n \ge 0)$ be a positive recurrent Markov chain taking values in an unbounded state space $S$. In many settings, one is interested in producing a positive recurrent Markov chain approximation to $X$ with bounded state space, perhaps motivated by numerical considerations. For example, in analyzing an irreducible positive recurrent Markov chain $X$ on countable state space $S$, numerical computation of the stationary distribution $\pi$ for $X$ requires truncating its state space to a finite subset $A_n$ having $n$ elements. Since the ``northwest corner'' matrix $(P(x,y): x,y \in A_n)$ is necessarily substochastic, it is natural to then ``augment'' the probabilities $P(x,y)$ so as to create a stochastic matrix $P_n = (P_n(x,y): x,y \in A_n)$. One then hopes that the stationary distribution $\pi_n$ of $P_n$ will be a good approximation to $\pi$. However, it is well known that universal convergence (as $n$ tends to $\infty$) for such truncation-augmentation schemes is not guaranteed in general; see, for example, (2.5) in \citet{wolfApproximationInvariantProbability1980} and \autoref{example-1} of this paper.

However, when one constructs the augmentation via a ``fixed state'' augmentation, \citet{wolfApproximationInvariantProbability1980} showed that convergence of $\pi_n$ to $\pi$ is guaranteed when $X$ is an irreducible positive recurrent countable state space Markov chain. In \citet{infangerConvergenceTruncationScheme2022}, we show that such convergence can be generally validated when the underlying Markov chain or process is suitably regenerative. This allows us to develop a convergence theory, in a suitably chosen weighted total variation norm, for irreducible positive recurrent Markov chains on countable state spaces and, more generally, for Harris recurrent Markov chains on general state spaces.

In this paper, we seek conditions under which one can be assured that any truncation-augmentation scheme is convergent. In the countable state space setting, one of us developed such results in \citet{liuAugmentedTruncationApproximations2010}. However, as we shall discuss in Section 2, the argument given there relies on results from \citet{borovkovErgodicityStabilityStochastic1998} that are not correctly stated. Section 2 therefore provides new sufficient conditions guaranteeing universal convergence. Our sufficient conditions require that the truncation set be chosen as a sublevel set of a coercive Lyapunov function. This result suggests that a Lyapunov function for the Markov chain $X$ can be useful in designing convergent truncation-augmentation schemes. We also use our approach to recover a convergence result of \citet{gibsonMonotoneInfiniteStochastic1987} and \citet{tweedieTruncationApproximationsInvariant1998}, for stochastically monotone Markov chains; see \autoref{thm2new}.

In Section 3, we study this convergence question in the setting of Markov chains taking values in a complete separable metric space. We generalize the results of Section 2 by establishing weak convergence of $\pi_n$ to $\pi$ (in contrast to the total variation convergence of \citet{infangerConvergenceTruncationScheme2022}), when $X$ has suitably continuous transition probabilities. The arguments in Sections \ref{sec-2} and \ref{sec-3} do not assume Harris recurrence. Rather, they rely on Prohorov’s theorem and related tightness ideas, and the methods establish convergence for arbitrary augmentations. Section \ref{sec-4} uses an argument based on regeneration to establish that general augmentations are valid for strongly uniformly recurrent Markov chains on general state space, thereby generalizing the known theory for Markov matrices on discrete state space to the continuous setting.

This paper concludes, in Section \ref{sec-5}, with a brief discussion of the related convergence theory for continuous time Markov jump processes.

\section{Truncation-Augmentation for Markov Chains with Countably Infinite State Space}\label{sec-2}
In this section, we consider an irreducible positive recurrent Markov chain $X=(X_n:n\geq 0)$ taking values in a countably infinite state space $S$. We let $P=(P(x,y):x,y\in S)$ be the one-step transition matrix of $X$, and we denote the (unique) stationary distribution of $X$ by $\pi=(\pi(x):x\in S)$. Let $(A_n:n\geq 0)$ be a strictly increasing sequence of subsets of $S$ satisfying $A_0\subset A_1\subset A_2\subset ...$ such that $\bigcap_{n=0}^\infty A_n^c=\emptyset$. For the $n$'th truncation $A_n$, let $B_n=(B_n(x,y):x,y\in A_n)$ be the corresponding ``northwest corner'' truncation of $P$ in which $B_n(x,y)=P(x,y)$ for $x,y\in A_n$. The irreducibility of $P$ then guarantees that there exists at least one row of $B_n$ with a row sum strictly less than 1. 

We say that $P_n$ is a \emph{general augmentation} of $B_n$ if $P_n=(P_n(x,y):x,y\in A_n)$ is a stochastic matrix for which
\begin{align*}
P_n(x,y)\geq B_n(x,y)=P(x,y)
\end{align*}
for $x,y\in A_n$. If there exists a probability distribution $\nu_n=(\nu_n(x):x\in A_n)$ for which
\begin{align}
P_n(x,y) = P(x,y) + \sum_{z\in A_n^c}^{}P(x,z)\nu_n(y),\label{eq-21}
\end{align}
then $P_n$ is said to be a \emph{linear augmentation} of $B_n$. If there exists a probability $\nu=(\nu(x):x\in S)$ for which
\begin{align*}
\nu_n(x) = \frac{\nu(x)}{\sum_{y\in A_n}^{}\nu(y)}
\end{align*}
for $x\in A_n$, then we say that $P_n$ is formed from a \emph{fixed linear augmentation} $\nu$. If $\nu=\delta_y$ for a fixed $y\in S$, where $\delta_y=(\delta_y(x):x\in S)$ is a unit point mass distribution at $y$, then we say that $P_n$ is a \emph{fixed state augmentation} of $B_n$. When $S=\mathbb{Z}_+$ and $A_n=\{0,1,...,n\}$, setting $\nu=\delta_0$ is called \emph{first state augmentation}, whereas the use of $\nu_n=\delta_n$ is called \emph{last state augmentation}.

Let $\Pi_n$ be the set of stationary distributions on $A_n$ associated with $P_n$. Since $|A_n|$ is finite, $\Pi_n$ is always non-empty. It is easily seen that when $P_n$ is a general augmentation, $\Pi_n$ may not be a singleton. However, for linear augmentations (whether fixed or not), $\Pi_n$ is always guaranteed to consist of a single unique stationary distribution $\pi_n$ for $P_n$. This follows because the irreducibility of $P$ guarantees that the probability of an exit to $A_n^c$ is positive from every $x\in A_n$. Once $X$ attempts to exit to $A_n^c$, the chain is re-distributed on $A_n$ according to $\nu_n$; see \eqref{eq-21}. Hence, every state $y\in A_n$ for which $\nu_n(y)>0$ is reachable from every $x\in A_n$, as are all the states reachable from such states $y\in A_n$. Consequently, $P_n$ has exactly one closed communicating class, so that $P_n$ has a unique stationary distribution $\pi_n$; see also p. 261 of \citet{senetaComputingStationaryDistribution1980}.

To make our discussion as self-contained as possible, we now provide an example showing that even when $X$ is very well-behaved, $\pi_n$ may fail to converge to $\pi$ as $n\rightarrow\infty$.

\begin{example}\label{example-1} Suppose that $S=\mathbb{Z}_+$ with $P(2i, 2i+1)=1/2=P(2i,0)$ for $i\in \mathbb Z_+$, and $P(2i+1, 2i+2)=1$ for $i\in \mathbb Z_+$. Then,
\begin{align*}
P^2(2i,0)\geq P(2i,0)P(0,0) =\frac{1}{4},
\end{align*}
for $i\in \mathbb{Z}_+$ whereas,
\begin{align*}
P^2(2i+1,0)\geq P(2i+1, 2i+2)P(2i+2,0)=\frac{1}{2}
\end{align*}
for $i\in \mathbb Z_+$. Hence $P^2(x,0)\geq 1/4$ for $x\in \mathbb Z_+$, so that the two-step transition matrix is a Markov matrix, and $X$ is uniformly ergodic.

Suppose we use last state augmentation. Then, when $A_n=\{0,1,...,n\}$ with $n$ odd, state $n$ is absorbing, and the single closed communicating class corresponding to $P_n$ is just $\{n\}$. It follows that $\pi_n=\delta_n$ for $n$ odd, so that $\pi_n$ fails to converge to the stationary distribution $\pi$ of $X$, despite the fact that $X$ is uniformly ergodic. 
\end{example}
A significant literature has developed over the years, focused on obtaining conditions under which various augmentations are guaranteed to converge. As noted in the Introduction, \citet{wolfApproximationInvariantProbability1980} proved that fixed state augmentation is always convergent. This supplemented earlier work of \citet{golubComputationStationaryDistribution1974}, in which it was shown that last state augmentation converges for upper Hessenberg transition matrices. \citet{gibsonAugmentedTruncationsInfinite1987} showed that general augmentations always converge when $P$ is either a Markov matrix or when it is upper Hessenberg. They also showed that when $P$ is lower Hessenberg and $P_n$ is a linear augmentation for which the sequence $(\nu_n:n\geq 0)$ is chosen to be tight, then $\pi_n$ is guaranteed to converge to $\pi$. \citet{gibsonMonotoneInfiniteStochastic1987} showed that convergence also holds for general augmentations when the underlying Markov chain is stochastically monotone. \citet{tweedieTruncationApproximationsInvariant1998} proved convergence for the special case of last state augmentation when $P$ corresponds to a stochastically monotone Markov chain (although most of the paper, as with much of the subsequent literature, focuses on identifying computable rates of convergence of $\pi_n$ to $\pi$).

In \citet{liuAugmentedTruncationApproximations2010}, convergence of $\pi_n$ to $\pi$ for general augmentations is discussed, in the presence of a Lyapunov condition denoted there as $D1(V,b,C)$. The arguments given there rely on the following result (Theorem 5.1) from \citet{borovkovErgodicityStabilityStochastic1998} (suitably re-stated for our current exposition).\\

\begin{quote} Suppose that $(P_n:n\geq 1)$ is a family of one-step transition matrices defined on a countable state space $S$ for which there exists a finite subset $C\subseteq S$, $\lambda>0$, $c<\infty$, and a probability $\phi=(\phi(x):x\in S)$ such that:
\begin{enumerate}[label=\roman*)]
  \item Under $P_n$, $X$ is guaranteed to hit $C$ from any state $x\in S$;
  \item $P_n(x,y)\geq \lambda\phi(y)$ for $x\in C,y\in S$;
  \item $\max_{x\in C} E_n[\tau(C)|X_0=x]\leq c$, where $E_n(\cdot)$ is the expectation on the path-space of $X$ associated with $P_n$ and $\tau(C)=\inf\{n\geq 1: X_n\in C\}$ is the first return time to $C$.
\end{enumerate}
Then, $P_n$ has a unique stationary distribution $\pi_n$, and
\begin{align}
\sup_{n\geq 1}\sum_{y\in S}^{}|P_n^m(x,y)-\pi_n(y)|\rightarrow 0\label{eq-22}
\end{align}
as $m\rightarrow\infty$.
\end{quote}
\ \\ 

Unfortunately, this result is not valid as stated, as made clear by the following counter-example. (The above statement is also missing the obvious aperiodicity requirement, but the aperiodic example below focuses on the more subtle flaw in the statement.)

\begin{example} Suppose that $S=\mathbb{Z}_+$ with $P_n(i,i-1)=1$ for $i\geq 1$ and $n\geq 1.$ We put $P_n(0,0)=1-(1/(n+1))$ and $P_n(0,n)=1/(n+1)$ for $n\geq 1$. Then, we let $C=\{0\}$, $\lambda=1/2$, and $\phi=\delta_0$. Note that
\begin{align*}
E_n[\tau(C)|X_0=0]=1\cdot \left(1-\frac{1}{n+1}\right)+(n+1)\cdot\frac{1}{n+1}\leq 2,
\end{align*}
so that conditions i), ii), and iii) are all in force, uniformly in $n\geq 1$. For this example,
\begin{align*}
\pi_n(0)=\left(2-\frac{1}{n+1}\right)^{-1}
\end{align*}
with
\begin{align*}
\pi_n(i) = \frac{1}{(n+1)}\left(2-\frac{1}{n+1}\right)^{-1}
\end{align*}
for $1\leq i\leq n$. Also,
\begin{align*}
P_n^m(0,0) \geq \left(1-\frac{1}{n+1}\right)^m
\end{align*}
so 
\begin{align*}
\sup_{n\geq 1}\left[P_n^m(0,0)-\pi_n(0)\right]\geq \sup_{n\geq 1}\left[\left(1-\frac{1}{n+1}\right)^m - \left(2-\frac{1}{n+1}\right)^{-1}\right] = \frac{1}{2},
\end{align*}
so \eqref{eq-22} fails to hold.
\end{example}

In view of this example, we now provide here a new discussion of the convergence theory of general augmentations that does not rely on this mis-stated result from \citet{borovkovErgodicityStabilityStochastic1998}, and that is of interest in its own right. We start by recalling that a function $f:S\rightarrow\mathbb{R}_+$ is said to be \emph{coercive} if the sublevel set $K_m(f)\overset{\Delta}{=}\{x\in S: f(x)\leq m\}$ has finite cardinality for each $m\geq 1$. Our theory relies on the following Lyapunov condition assumption:
\begin{Qtheorem}\label{a1} There exists $g:S\rightarrow\R_+$, $b<\infty$, and a coercive function $r$ such that
\begin{align}
(Pg)(x)\leq g(x)-r(x)+b\label{eq23}
\end{align}
for $x\in S$.
\end{Qtheorem}

\begin{remark} We refer to the function $g$ appearing in \eqref{eq23} as a (stochastic) \emph{Lyapunov function}.
\end{remark}

We note that because $g$ is non-negative, $(Pg)(\cdot)$ is non-negative, so that $r(x)\leq g(x)+b$ for $x\in S$. Hence, if $g(x)\leq m$, it follows that $r(x)\leq m+b$, so $K_m(g)\subseteq K_{m+b}(r)$. Consequently, the sublevel sets of the Lyapunov function $g$ have finite cardinality, so $g$ is also coercive. 

We now assume that we construct our truncation sets based on \autoref{a1}. In particular, we use the Lyapunov function $g$ to design our truncation sequence $(A_n:n\geq 1)$, specifically putting $A_n=K_n(g)$. 
\setcounter{theorem}{0}
\begin{theorem} \label{thm-1} Assume \autoref{a1} and suppose that $A_n=K_n(g)$ for $n\geq 1$. Then, for any general augmentation sequence $(P_n:n\geq 1)$ associated with the $A_n$'s, and for any $\pi_n\in \Pi_n$,
\begin{align}
\sum_{x\in A_n}^{}|\pi_n(x)-\pi(x)|\rightarrow 0\label{eq24}
\end{align}
as $n\rightarrow\infty$.
\end{theorem}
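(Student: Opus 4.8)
The plan is to exploit the Lyapunov inequality \eqref{eq23} to obtain a \emph{uniform} tightness estimate on the family $(\pi_n : n \ge 1)$, and then to identify every subsequential weak limit of $\pi_n$ (suitably extended to all of $S$) as $\pi$. First I would observe that, since $A_n = K_n(g)$ and $P_n(x,y) \ge P(x,y)$ for $x,y \in A_n$, the augmented transition matrix $P_n$ inherits a Lyapunov-type drift from \eqref{eq23}. Concretely, for $x \in A_n$, $(P_n g)(x) = \sum_{y \in A_n} P_n(x,y) g(y)$; writing $P_n(x,y) = P(x,y) + Q_n(x,y)$ where $Q_n(x,y) \ge 0$ redistributes the escaped mass $\sum_{z \in A_n^c} P(x,z)$ onto $A_n$, and using that $g(y) \le n$ for $y \in A_n$, one gets $(P_n g)(x) \le (Pg)(x) + n \sum_{z \in A_n^c} P(x,z) \le g(x) - r(x) + b + n\,P(x, A_n^c)$. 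The subtle point is the term $n\,P(x,A_n^c)$: I would control it by noting that on $A_n^c$ we have $g(y) > n$, so $n P(x,A_n^c) \le \sum_{y \in A_n^c} g(y) P(x,y) \le (Pg)(x) \le g(x)+b$ — but this is too lossy by itself. Instead the right move is to pick the truncation level adaptively or to use a second, coercive-in-$r$ bound; I will return to this as the main obstacle below.

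Assuming the drift estimate is in hand in the clean form $(P_n g)(x) \le g(x) - r(x) + b$ for $x \in A_n$ (which holds, e.g., after replacing $g$ by a slightly modified Lyapunov function, or by a comparison argument using coercivity of $r$ relative to $g$), I would then sum against the stationary distribution: stationarity of $\pi_n$ gives $\sum_{x \in A_n} \pi_n(x) (P_n g)(x) = \sum_{x \in A_n} \pi_n(x) g(x)$, whence $\sum_{x \in A_n} \pi_n(x) r(x) \le b$. Since $r$ is coercive, $K_{m}(r)$ is finite for every $m$, and Markov's inequality yields $\pi_n(S \setminus K_m(r)) = \pi_n(A_n \setminus K_m(r)) \le b/m$ uniformly in $n$. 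This is precisely uniform tightness of $(\pi_n)$, viewed as probability measures on $S$ (extending $\pi_n$ by zero off $A_n$). By Prohorov's theorem, every subsequence has a further subsequence along which $\pi_n \Rightarrow \mu$ for some probability measure $\mu$ on $S$; on a countable (discrete) space weak convergence is convergence of the individual masses, and tightness upgrades this to convergence in total variation, i.e. $\sum_x |\pi_n(x) - \mu(x)| \to 0$ along the subsequence.

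Next I would show $\mu = \pi$. Fix $y \in S$. For $n$ large enough that $y \in A_n$, stationarity of $\pi_n$ for $P_n$ gives $\pi_n(y) = \sum_{x \in A_n} \pi_n(x) P_n(x,y) \ge \sum_{x \in A_n} \pi_n(x) P(x,y)$. Passing to the limit along the subsequence, using $\pi_n \to \mu$ in total variation and Fatou (or dominated convergence, since $P(\cdot,y) \le 1$ and the masses concentrate on a tight set), one obtains $\mu(y) \ge \sum_{x \in S} \mu(x) P(x,y) = (\mu P)(y)$ for every $y$. Summing over $y$ and using that both sides sum to $1$ forces equality: $\mu P = \mu$. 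Since $P$ is irreducible positive recurrent, its stationary distribution is unique, so $\mu = \pi$. As the limit is the same ($\pi$) for every convergent subsequence, the whole sequence converges, giving \eqref{eq24}. (One subtlety in the last step: I must rule out mass escaping to infinity when passing the inequality $\pi_n(y) \ge \sum_x \pi_n(x) P(x,y)$ to the limit — this is exactly where uniform tightness of $(\pi_n)$ is used, via a standard $\varepsilon/3$ argument splitting $A_n$ into $K_m(r)$ and its complement.)

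\textbf{Main obstacle.} The crux is establishing that the augmented chain satisfies a genuine drift inequality with a coercive right-hand side, despite the augmentation being \emph{arbitrary} — the escaped mass $\sum_{z \in A_n^c} P(x,z)$ can be redistributed onto the states of $A_n$ with the largest values of $g$ (namely those near level $n$), which is the adversarial case and the source of Example~\ref{example-1}. The key realization that makes it work is that the truncation is a \emph{sublevel set of $g$ itself}: for any $y \in A_n = K_n(g)$ we have $g(y) \le n$, while for the mass that escapes, $x$ must have charged states $z$ with $g(z) > n$, so $\sum_{z \in A_n^c} P(x,z) \cdot n \le \sum_{z \in A_n^c} P(x,z) g(z) \le (Pg)(x) \le g(x) + b$. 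The worst that the augmentation can do is therefore bounded by a quantity already controlled by $g$; combining this carefully with \eqref{eq23} — at the cost of, say, an extra additive constant or a harmless modification like passing to $g' = g + \text{const}$ and $r' = r/2$ — yields $(P_n g)(x) \le g(x) - \tfrac12 r(x) + b'$ on $A_n$ for a finite $b'$, which suffices for the tightness argument above. Getting the bookkeeping in this comparison exactly right (and verifying the resulting $r'$ is still coercive, which it is since it differs from $r$ only by a positive multiplicative constant) is the one genuinely delicate part of the argument.
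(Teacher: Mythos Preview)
Your overall strategy is exactly the paper's: extend the Lyapunov drift to $P_n$, deduce $\sum_x \pi_n(x)\,r(x)\le b$ uniformly in $n$, obtain tightness from Markov's inequality and coercivity of $r$, pass to a subsequential limit via Prohorov, and identify the limit as $\pi$ using Fatou and uniqueness of the stationary distribution. The identification-of-the-limit part is fine.

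Where you stumble is the drift-extension step, and the gap is real as written. The bound you actually need is not a lossy $(P_n g)(x)\le g(x)-\tfrac12 r(x)+b'$ but the \emph{exact} inequality $(P_n g)(x)\le (Pg)(x)$, with no loss whatsoever. You already have both ingredients: (i) the augmented mass satisfies $\sum_{y\in A_n}(P_n(x,y)-P(x,y))\,g(y)\le n\cdot P(x,A_n^c)$ since $g\le n$ on $A_n=K_n(g)$; and (ii) $n\cdot P(x,A_n^c)\le \sum_{z\in A_n^c}P(x,z)\,g(z)$ since $g>n$ on $A_n^c$. The correct move is to add (i) and (ii) to the \emph{partial} sum $\sum_{y\in A_n}P(x,y)\,g(y)$, which gives precisely
\[
(P_n g)(x)\;\le\;\sum_{y\in A_n}P(x,y)\,g(y)+\sum_{z\in A_n^c}P(x,z)\,g(z)\;=\;(Pg)(x)\;\le\;g(x)-r(x)+b.
\]
Instead, your displayed bound ``$(P_n g)(x)\le (Pg)(x)+n\,P(x,A_n^c)$'' first discards the $A_n^c$-portion of $(Pg)(x)$ and then re-introduces it as an error; bounding that error by $(Pg)(x)\le g(x)+b$ yields at best $(P_n g)(x)\le 2g(x)-2r(x)+2b$, which, upon summing against $\pi_n$ and using stationarity, gives only $2\,\pi_n r\le \pi_n g+2b$ --- useless without an a~priori bound on $\pi_n g$. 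So your claimed conclusion $(P_n g)(x)\le g(x)-\tfrac12 r(x)+b'$ is not substantiated by the inequalities you wrote. Once you replace the wasteful step with the clean observation $(P_n g)(x)\le (Pg)(x)$ (this is exactly the paper's equation \eqref{eq25}), the rest of your proof goes through verbatim and coincides with the paper's.
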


\begin{proof} We first show that the Lyapunov bound \autoref{a1} for $P$ can be extended (uniformly) to the $P_n$'s associated with a sequence of general augmentations. In particular, we observe that for $x\in A_n$,
\begin{align}
\sum_{y\in A_n}P_n(x,y)g(y) &= \sum_{y\in A_n}^{}P(x,y)g(y) + \sum_{y\in A_n}^{}(P_n(x,y)-P(x,y))g(y)\nonumber\\
&\leq \sum_{y\in A_n}^{}P(x,y)g(y) + \sum_{y\in A_n}(P_n(x,y)-P(x,y))\sup_{z\in A_n} g(z)\nonumber\\
&\leq \sum_{y\in A_n}^{} P(x,y)g(y) + \sum_{y\in A_n^c}^{}P(x,y)\inf_{z\in A_n^c}g(z)\nonumber\\
&\leq \sum_{y\in A_n}^{}P(x,y)g(y) + \sum_{y\in A_n^c}^{}P(x,y)g(y)\nonumber\\
&= (Pg)(x)\nonumber\\
&\leq g(x)-r(x)+b,\label{eq25}
\end{align}
which, of course, is the desired uniform version of \eqref{eq23}. (Note that the fact that $A_n$ is chosen as the sublevel set $K_n(g)$ is used in a critical way in our second inequality above.)

We can now apply Corollary 4 of \citet{glynnBoundingStationaryExpectations2008}, p. 202, to \eqref{eq25} to conclude that
\begin{align*}
\sum_{x\in A_n}^{}\pi_n(x)r(x)\leq b
\end{align*}
for $n\geq 1$. For $\epsilon>0$, choose $m=m(\epsilon)$ large enough that $b/m<\epsilon$. Then, Markov's inequality guarantees that
\begin{align*}
\sum_{\substack{x\not\in K_m(r)\\ x\in A_n}}^{}\pi_n(x)\leq b/m<\epsilon
\end{align*}
and hence 
\begin{align*}
\sum_{x\in K_m(r)} \pi_n(x)\geq \sum_{x\in K_m(r)\cap A_n}^{}\pi_n(x)\geq 1-\epsilon
\end{align*}
uniformly in $n\geq 1$. It follows that $(\pi_n:n\geq 1)$ is a tight sequence of probabilities on $S$. Consequently, Prohorov's theorem implies that every subsequence $(\pi_{n_k}:k\geq 1)$ contains a further subsequence $(\pi_{n'_k}:k\geq 1)$ for which
\begin{align}
\pi_{n'_k}(x)\rightarrow\pi'(x)\label{eq26}
\end{align}
at each $x\in S$ as $k\rightarrow\infty$, where $\pi'=(\pi'(x):x\in S)$ is a probability on $S$; see, for example, \citet{billingsleyConvergenceProbabilityMeasures1968}. Then, for $y\in S$,
\begin{align}
\pi'(y) &= \lim_{k\rightarrow\infty}\pi_{n'_k}(y)\nonumber\\
&=\lim_{k\rightarrow\infty} \sum_{x\in A_n}^{}\pi_{n'_k}(y)P_{n'_k}(x,y)\nonumber\\
&\geq \sum_{x\in S}^{}\varliminf_{k\rightarrow\infty} \pi_{n'_k}(x)P_{n'_k}(x,y)\nonumber\\
&= \sum_{x\in S}^{}\pi'(x)P(x,y),\label{eq27}
\end{align}
as a result of Fatou's lemma. Since,
\begin{align}
1 = \sum_{y\in S}^{}\pi'(y) = \sum_{y\in S}\sum_{x\in S}^{}\pi'(x)P(x,y)\label{eq-28}
\end{align}
it is evident that \eqref{eq27} must hold with equality. Since $\pi$ is the unique stationary distribution of $P$, we find that $\pi=\pi'$. Since every convergent subsequence $(\pi_{n'_k}:k\geq 0)$ must have the same limit $\pi$, we conclude that for each $x\in S$, 
\begin{align}
\pi_n(x)\rightarrow\pi(x)\label{eq29}
\end{align}
as $n\rightarrow\infty$. Since $S$ is countably infinite, this easily implies {\eqref{eq24}}. 
\end{proof}

\begin{remark} The novel element in the above proof is establishing tightness. \citet{senetaComputingStationaryDistribution1980} proves that tightness implies convergence; our argument of \eqref{eq26} through \eqref{eq29} is provided in order to make the argument as self-contained as possible. The argument also makes clear that if $A_n\nearrow S$ is chosen so that $\sup_{z\in A_n} g(z)\leq \inf_{z\in A_n^c} g(z)$ for $n\geq 1$, the proof continues to be valid. This observation is used in the proof of \autoref{thm2new}.\label{remark2new}
\end{remark}

\begin{remark}\label{remark2}
For \autoref{example-1}, we note that if $g(2i)=e^{\theta i}$ and $g(2i+1)=e^{\theta(i+3/2)}$ for $i\geq 0$, then 
\begin{align*}
(Pg)(2i) &= \frac{1}{2}g(0) + \frac{1}{2}g(2i+1)\\
&= g(2i) - (1-\frac{1}{2}e^{3\theta/2})e^{\theta i} + \frac{1}{2},
\end{align*}
while 
\begin{align*}
(Pg)(2i+1) &= g(2i+2)\\
&= g(2i+1) - \left(e^{3\theta/2} - e^{\theta} \right)e^{\theta i}.
\end{align*}
Hence, if we choose $\theta>0$ so that $e^{3\theta/2}<2$, we find that \autoref{a1} holds with $r$ coercive. In this case, we note that the sublevel sets of $g$ can only take the form $\{0,1,...,2i\}$ for $i\in \mathbb Z_+$, so that the bad $A_n$'s of \autoref{example-1} are precluded by our choice of Lyapunov function.
\end{remark}

\begin{remark} Note that Theorem \ref{thm-1} establishes that no strictly increasing Lyapunov function $g$ can be constructed for Example \ref{example-1}, for otherwise we would be guaranteed convergence along the sequence of truncation sets given by $A_n=\{0,1,...,n\}$.
\end{remark}

\begin{remark} For \autoref{example-1}, the uniform ergodicity of $X$ implies that one can find a bounded non-negative $g$ for which
\begin{align}
(Pg)(x)\leq g(x)-1\label{eq-210}
\end{align}
for $x\geq 1$. (In particular, we can take $g(x)$ as the expected hitting time of $\{0\}$, starting from $x$.) Inequality \eqref{eq-210} is closely related to condition $D1(V,b,C)$ of \citet{liuAugmentedTruncationApproximations2010}. Note that the sublevel sets of this $g$ form truncation sets that either equal $S$ (when the level is chosen higher than the bound) or take the same form as in \autoref{remark2}. Thus, in this example, \autoref{thm-1} holds, even though the function $r$ associated with \eqref{eq-210} is non-coercive. In particular, we do not know whether \autoref{thm-1} continues to hold when one chooses $A_n=K_n(g)$ in the presence of \eqref{eq-210} holding outside some finite subset $C\subseteq S$. (Of course, we would want to add the requirement that $g$ be coercive, in order to ensure that $K_n(g)$ has finite cardinality for each $n\geq 1$.)
\end{remark}

\begin{remark} When \autoref{a1} holds, the coerciveness of $r$ guarantees that 
\begin{align}
(Pg)(x)\leq g(x)-1\label{eq211}
\end{align}
for $x$ outside some finite subset $C$ (say). This implies that the expected hitting time of $C$ from $x\in C^c$ is bounded above by $g(x)$; see, for example, p. 344 in \citet{meynMarkovChainsStochastic2009}. Hence, \autoref{a1} implies that $C$ can be reached with positive probability from each $x\in C^c$. On the other hand, the irreducibility of $P$ and finiteness of $C$ ensure that each $y\in C$ can be reached from each $x\in C$ by a path lying entirely within $K_n(g)$ when $n$ is chosen sufficiently large. Hence \autoref{a1} guarantees that for $n$ large, $\Pi_n$ is guaranteed to have a single (unique) element.
\end{remark}

We conclude this section by showing that the general augmentation result for stochastically monotone Markov chains due to \citet{gibsonMonotoneInfiniteStochastic1987} is a special case of \autoref{thm-1}, as is the last state augmentation result of \citet{tweedieTruncationApproximationsInvariant1998}. We recall that a Markov chain is \emph{stochastically monotone} on $\mathbb Z_+$ if $\sum_{w\geq y}P(x,w)$ is non-decreasing in $x$ for each $y$.

\begin{theorem}\label{thm2new} Suppose that $X$ is an irreducible and positive recurrent stochastically monotone Markov chain on $\mathbb{Z}_+$ with $A_n$ chosen to be $A_n=\{0,1,...,n\}$. Then, for any general augmentation sequence $(P_n:n\geq 1)$ associated with the $A_n$'s, and for any $\pi_n\in \Pi_n$, 
\begin{align*}
\sum_{x}^{}|\pi_n(x)-\pi(x)|\rightarrow 0
\end{align*}
as $n\rightarrow\infty$, where $\pi=(\pi(x):x\in S)$ is the unique stationary distribution of $X$.
\end{theorem}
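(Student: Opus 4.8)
The plan is to deduce \autoref{thm2new} from \autoref{thm-1} by way of \autoref{remark2new}. That remark shows the proof of \autoref{thm-1} goes through unchanged for the truncation sets $A_n=\{0,1,\dots,n\}$ as soon as one can exhibit a function $g:\mathbb{Z}_+\to\R_+$ satisfying \autoref{a1} with $r$ coercive which is in addition \emph{non-decreasing}: for then $\sup_{z\le n}g(z)=g(n)\le g(n+1)=\inf_{z>n}g(z)$ for every $n\ge 1$, which is precisely the hypothesis invoked in \autoref{remark2new}. So the whole task is to construct such a $g$, and stochastic monotonicity will be used only in order to make $g$ non-decreasing.

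First I would fix the weight. Since $\pi$ is a probability on $\mathbb{Z}_+$, a routine argument produces a non-decreasing coercive $f:\mathbb{Z}_+\to\R_+$ with $f(x)\to\infty$ and $c:=\pi(0)^{-1}\sum_x\pi(x)f(x)<\infty$ (partition the tail of $\pi$ into consecutive blocks of mass at most $2^{-j}$ and let $f$ increase by one as $x$ crosses from one block to the next). With $T_0=\inf\{n\ge 0:X_n=0\}$, define
\[
g(x)=E_x\Big[\,\sum_{k=0}^{T_0-1}f(X_k)\,\Big],\qquad x\in\mathbb{Z}_+,
\]
so that $g(0)=0$. A first-step decomposition (valid by Tonelli, as all summands are non-negative) gives $g(x)=f(x)+(Pg)(x)$ for $x\ge 1$, and the Kac occupation-measure identity gives $(Pg)(0)=\pi(0)^{-1}\sum_x\pi(x)f(x)-f(0)=c-f(0)$; together these yield $(Pg)(x)\le g(x)-f(x)+c$ for all $x$, which is exactly \autoref{a1} with Lyapunov function $g$, coercive $r=f$, and $b=c$. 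Before this is usable one also has to check that $g$ is finite everywhere: the bound $\sum_y P(0,y)g(y)=c-f(0)<\infty$ forces $g(y)<\infty$ for every $y$ with $P(0,y)>0$, and the identity $g(y)=f(y)+\sum_w P(y,w)g(w)$ then propagates finiteness of $g$ along any path issuing from $0$; by irreducibility every state is reachable from $0$, so $g$ is genuinely $\R_+$-valued.

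Next I would show $g$ is non-decreasing, and this is where stochastic monotonicity is used. Because $x\mapsto\sum_{w\ge y}P(x,w)$ is non-decreasing for each $y$, the quantile (monotone) coupling produces, for each pair $x\le y$, a coupling $(X_n,Y_n)_{n\ge 0}$ of the chain started from $x$ with the chain started from $y$ under which $X_n\le Y_n$ for all $n$. Consequently $T_0^X\le T_0^Y$ (at the first time $Y$ hits $0$ one has $0\le X_n\le Y_n=0$, so $X$ has already hit $0$), and $f(X_k)\le f(Y_k)$ for every $k$ since $f$ is non-decreasing; because all summands are non-negative, taking expectations yields $g(x)\le g(y)$. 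Thus $g$ is non-decreasing, so \autoref{remark2new} applies with $A_n=\{0,1,\dots,n\}$ and delivers \eqref{eq24} for every general augmentation associated with the $A_n$'s; the last-state-augmentation statement of \citet{tweedieTruncationApproximationsInvariant1998} is the special case in which each $P_n$ re-routes all escaped mass onto state $n$.

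The step I expect to be the main obstacle is the construction of $g$: the weight $f$ must be chosen coercive — it is the coerciveness of $r=f$ that drives the tightness argument inside the proof of \autoref{thm-1} — yet slow-growing enough that $g$ remains finite, and since the Kac identity only directly bounds $\sum_y P(0,y)g(y)$, the propagation-of-finiteness argument through irreducibility is the delicate point in turning $g$ into a bona fide $\R_+$-valued Lyapunov function. By contrast, establishing monotonicity of $g$ is comparatively routine once the monotone coupling is invoked.
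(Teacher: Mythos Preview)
Your argument is correct and follows the same high-level strategy as the paper: build a \emph{non-decreasing} Lyapunov function $g$ satisfying \autoref{a1} with a coercive $r$, then invoke \autoref{thm-1} via \autoref{remark2new} so that the intervals $\{0,1,\dots,n\}$ are admissible truncation sets. Where you diverge is in the construction of $g$. The paper takes $r(x)=\bar P(x)^{-1/2}$ (with an explicit integral bound giving $\sum_x r(x)\pi(x)\le 2$), centers it, and then appeals to an external result of \citet{glynnSolutionsPoissonEquation2022} asserting that Poisson's equation $(P-I)\tilde g=-r_c$ admits a finite, non-decreasing solution whenever $X$ is stochastically monotone and the forcing is non-decreasing. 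You instead choose a generic non-decreasing coercive $f$ with $\pi f<\infty$, set $g(x)=E_x\sum_{k=0}^{T_0-1}f(X_k)$, derive the Lyapunov inequality from the one-step decomposition together with Kac's formula at $x=0$, and prove monotonicity of $g$ directly via the quantile coupling. Your route is more self-contained (no external citation is needed for the monotonicity of $g$) and is in fact the same device the paper itself uses later in the continuous-state-space Theorem~5; the paper's route has the advantage of an explicit, clean choice of $r$ and avoids the finiteness-propagation step you flagged as delicate. Both lead to the same conclusion.
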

\begin{proof} Put
\begin{align*}
\bar{P}(x)=\sum_{y\geq x}^{}\pi(y)
\end{align*}
and set
\begin{align*}
r(x)=\bar{P}(x)^{-\frac{1}{2}}
\end{align*}
for $x\geq 0$. Clearly, $r$ is non-negative, non-decreasing, and coercive. Furthermore, 
\begin{align}
0\leq \sum_{k=0}^{n}r(k)\pi(k) &= \sum_{k=0}^{n}\bar P(k)^{-1/2}(\bar P(k)-\bar P(k+1))\nonumber\\
&= \sum_{k=0}^{n}\int_{\bar P(k+1)}^{\bar P(k)}\bar P(k)^{-1/2}du\nonumber\\
&\leq \sum_{k=0}^{n}\int_{\bar P(k+1)}^{\bar P(k)}u^{-1/2}du\nonumber\\
&=\int_{\bar P(n+1)}^{1}u^{-1/2}du \leq \int_{0}^{1}u^{-1/2}du=2<\infty.\label{eq212-pir-finite}
\end{align}
Hence, \eqref{eq212-pir-finite} shows that
\begin{align*}
\alpha\overset{\Delta}{=} \sum_{k=0}^{\infty}r(k)\pi(k)<\infty.
\end{align*}
As a consequence, we can put $r_c(x)=r(x)-\alpha$ and consider Poisson's equation
\begin{align}
(P-I)\tilde g = -r_c.\label{eq212new}
\end{align}

It is shown in \citet{glynnSolutionsPoissonEquation2022} that because $X$ is stochastically monotone and $r_c$ is non-decreasing, \eqref{eq212new} has a finite-valued non-decreasing solution $\tilde g$. Also, we note that $g(x)=\tilde g(x)-\tilde g(0)$ is guaranteed to be non-negative. Hence,
\begin{align*}
(Pg)(x)= g(x)-r(x)+\alpha
\end{align*}
for $x\geq 0$, where $r$ is non-negative and coercive and $g$ is non-negative. We can therefore apply Theorem \ref{thm-1}. Because $g$ is non-decreasing, the sublevel sets $K_n(g)$ take the form $\{0,1,2,...,g^{-1}(n)\}=A_{g^{-1}(n)}$, so that \autoref{thm-1} and \autoref{remark2new} yield the desired result.
\end{proof}

The key special feature of a stochastically monotone Markov chain is that we may always choose $A_n$ to be an ``interval'' of the form $\{0,1,...,n\}$, and yet retain convergence for general augmentations.

\section{Convergence of General Augmentations for Continuous State Space Markov Chains}\label{sec-3}

In this section, we assume that $X=(X_n:n\geq 0)$ is a Markov chain taking values in a complete separable metric space $X$. For $x,y\in S$, let
\begin{align*}
P(x,dy) = P(X_{n+1}\in dy|X_n=x)
\end{align*}
and let $P=(P(x,dy):x,y\in S)$ be the one-step transition kernel of $X$. We assume that $X$ has a unique stationary distribution $\pi=(\pi(dx):x\in S)$ satisfying the equation 
\begin{align*}
\pi(dy) = \int_{S}^{}\pi(dx)P(x,dy)
\end{align*}
for $y\in S$.

As in \autoref{sec-2}, our theory requires a Lyapunov function assumption. In this setting, we say that a function $f:S\rightarrow\mathbb R_+$ is \emph{coercive} if for each $n\geq 1$, $K_n(f)=\{x\in S:f(x)\leq n\}$ is either empty or compact in $S$.

\begin{Qtheorem}\label{a2} There exists a coercive function $r$, a non-negative function $g:S\rightarrow\R_+$, and $b<\infty$ for which
\begin{align}
\int_{S}^{}P(x,dy)g(y)\leq g(x)-r(x)+b\label{eq-31}
\end{align}
for $x\in S$.
\end{Qtheorem}

As in \autoref{sec-2}, $K_m(g)\subseteq K_{m+b}(r)$. If $g$ is continuous, the fact that $[0,m]$ is closed in $\R_+$ implies that $K_m(g)$ is closed. Also, $K_m(g)$ is a closed subset of the compact set $K_{m+b}(r)$, and so $K_m(g)$ is then compact; see p. 73 of \citet{copsonMetricSpaces1968}. In other words, if $g$ is continuous, \autoref{a2} guarantees that $g$ is coercive.

Let $A_n = K_n(g)$, and put $B_n=(B_n(x,dy): x,y\in A_n)$, where $B_n(x,dy)=P(x,dy)$ for $x,y\in A_n$. Let $P_n=(P_n(x,dy):x,y\in A_n)$ be a general augmentation for $B_n$, so that it is a stochastic kernel on $A_n$ for which 
\begin{align*}
P_n(x,dy) \geq B_n(x,dy) = P(x,dy)
\end{align*}
for $x,y\in A_n$. Suppose that $\Pi_n$ is the set of stationary distributions for $P_n$. Given our assumptions up to this point, $\Pi_n$ may be either empty, consist of a singleton, or contain a multiplicity of elements.

\begin{theorem} \label{thm-2}Suppose \autoref{a2} holds and that $\Pi_n$ is non-empty for $n\geq 1$. If $\pi_n\in \Pi_n$, then $(\pi_n:n\geq 1)$ is a tight sequence of probabilities on $S$.
\end{theorem}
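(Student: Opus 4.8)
The plan is to mirror the tightness argument from the proof of Theorem~\ref{thm-1}, replacing the countable-state bookkeeping with the compact-sublevel-set structure available in the metric-space setting. The essential step is to establish that the Lyapunov drift inequality \eqref{eq-31} for $P$ transfers, uniformly in $n$, to the augmented kernels $P_n$. This is where the choice $A_n = K_n(g)$ does the work: for $x \in A_n$ one writes $\int_{A_n} P_n(x,dy) g(y) = \int_{A_n} P(x,dy) g(y) + \int_{A_n} (P_n - P)(x,dy) g(y)$, bounds $g$ on the support of the excess mass $(P_n - P)(x,\cdot)$ (which lives on $A_n$) above by $\sup_{z \in A_n} g(z) \le n$, and observes that the total excess mass equals $P(x, A_n^c) = \int_{A_n^c} P(x,dy)$, while $g(y) \ge n \ge \sup_{z\in A_n} g(z)$ for $y \in A_n^c$ since $A_n = K_n(g)$. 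Chaining these exactly as in \eqref{eq25} gives $\int_{A_n} P_n(x,dy) g(y) \le (Pg)(x) \le g(x) - r(x) + b$ for all $x \in A_n$, i.e.\ a uniform version of \eqref{eq-31}.

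With the uniform drift inequality in hand, the next step is to integrate it against $\pi_n \in \Pi_n$. Since $\pi_n$ is $P_n$-stationary on the finite-drift region, one has $\int_{A_n} \pi_n(dx) \int_{A_n} P_n(x,dy) g(y) = \int_{A_n} \pi_n(dy) g(y)$ whenever $g$ is $\pi_n$-integrable; the standard way to make this rigorous without assuming integrability a priori is to invoke the analogue of Corollary~4 of \citet{glynnBoundingStationaryExpectations2008} (or a direct truncation argument, since $g$ is finite on the compact set $A_n$ and hence bounded there), yielding $\int_{A_n} \pi_n(dx)\, r(x) \le b$ for every $n \ge 1$. Then, for $\epsilon > 0$, pick $m = m(\epsilon)$ with $b/m < \epsilon$; Markov's inequality gives $\pi_n\big(A_n \setminus K_m(r)\big) \le b/m < \epsilon$, so $\pi_n\big(K_m(r)\big) \ge 1 - \epsilon$ uniformly in $n$. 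Because $r$ is coercive, $K_m(r)$ is compact in $S$, and we have exhibited, for each $\epsilon$, a single compact set carrying at least $1-\epsilon$ of the mass of every $\pi_n$. This is precisely tightness of $(\pi_n : n \ge 1)$.

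The main obstacle I anticipate is the integration-against-$\pi_n$ step: one must be careful that $g$ — which is only assumed non-negative, not bounded globally, and not even assumed continuous in the bare statement of \autoref{a2} — can legitimately be paired with $\pi_n$, and that the drift inequality, which only holds for $x \in A_n$, suffices. The resolution is that $\pi_n$ is supported on $A_n$ (it is a stationary distribution of a kernel on $A_n$), $g$ restricted to $A_n$ is finite everywhere and, because $A_n$ is compact, bounded there if $g$ is additionally continuous — but to avoid even needing continuity of $g$ one applies the general stationary-expectation bound of \citet{glynnBoundingStationaryExpectations2008}, which is designed exactly to extract $\int \pi_n r \le b$ from a drift inequality of the form \eqref{eq25} without integrability preconditions. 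Everything else is a routine transcription of the countable-space argument, with ``finite cardinality'' replaced by ``compact'' and sums replaced by integrals; in particular, no irreducibility, Harris recurrence, or uniqueness of $\pi_n$ is needed for tightness, which is why the theorem only asserts tightness and defers the identification of subsequential limits with $\pi$ to the subsequent development.
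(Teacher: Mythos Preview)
Your proposal is correct and follows essentially the same route as the paper's own proof: transfer the drift inequality from $P$ to $P_n$ via the sublevel-set property of $A_n=K_n(g)$ exactly as in \eqref{eq25}, apply Corollary~4 of \citet{glynnBoundingStationaryExpectations2008} to obtain $\int_{A_n}\pi_n(dx)\,r(x)\le b$, and then use Markov's inequality together with compactness of $K_m(r)$ to conclude tightness. Your additional commentary on why the integration-against-$\pi_n$ step is legitimate is accurate and goes slightly beyond what the paper spells out, but the underlying argument is the same.
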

\begin{proof} Because $A_n$ is chosen as a sublevel set of $g$, the same argument as used in \autoref{thm-1} proves that
\begin{align*}
\int_{A_n}^{}P_n(x,dy) g(y) \leq g(x)-r(x)+b
\end{align*}
for $x\in A_n$. Again, Corollary 4 of \citet{glynnBoundingStationaryExpectations2008} proves that
\begin{align*}
\int_{A_n}^{}\pi_n(dx)r(x)\leq b
\end{align*}
for $n\geq 1$. For $\epsilon>0$, choose $m=m(\epsilon)$ so that $b/m<\epsilon$. Then, Markov's inequality implies that $\pi_n(K_m(r)^c)<\epsilon$, and hence
\begin{align*}
\pi_n(K_m(r))\geq 1-\epsilon
\end{align*}
uniformly in $n\geq 1$. Since $K_m(r)$ is compact, this establishes the tightness of $(\pi_n:n\geq 1)$.
\end{proof}

\begin{remark} As in the proof of \autoref{thm2new}, it suffices that $A_n\nearrow S$ with\\ $\sup_{z\in A_n} g(z)\leq \inf_{z\in A_n^c} g(z)$, in order that the result be valid.\\
\end{remark}

We now wish to argue that any weak limit point of $(\pi_n:n\geq 1)$ must equal $\pi$. For this purpose, we let $bC$ be the space of real-valued bounded continuous functions with domain $S$, and assume:

\begin{Qtheorem}\label{a3} If $f\in bC$, then
\begin{align*}
(Pf)(\cdot)\overset{\Delta}{=}\int_{S}^{}f(y)P(\cdot,dy)
\end{align*}
also lies in $bC$.
\end{Qtheorem}

A transition kernel satisfying \autoref{a3} is said to be \emph{weakly continuous} (or equivalently, \emph{Feller continuous}).

\begin{theorem}\label{thm4} Suppose \autoref{a2} and \autoref{a3} hold with $g$ continuous, and that $\Pi_n$ is non-empty for $n\geq 1$. If $\pi_n\in\Pi_n$, then
\begin{align*}
\pi_n\Rightarrow \pi
\end{align*}
as $n\rightarrow\infty$, where $\Rightarrow$ denotes weak convergence in $S$.
\end{theorem}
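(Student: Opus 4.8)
The plan is to leverage the tightness already established in \autoref{thm-2}, pass to a weak subsequential limit via Prohorov's theorem, show that any such limit is stationary for $P$, and then use uniqueness of $\pi$ together with tightness to obtain convergence of the whole sequence. First, by \autoref{thm-2} the sequence $(\pi_n:n\ge1)$ is tight, so Prohorov's theorem yields, from any subsequence, a further subsequence $(\pi_{n'_k}:k\ge1)$ with $\pi_{n'_k}\Rightarrow\pi'$ for some probability $\pi'$ on $S$. It suffices to show $\pi'=\pi$, since tightness then forces $\pi_n\Rightarrow\pi$ along the full sequence. Note also that $g$ is $\R_+$-valued, so $A_n=K_n(g)\nearrow S$, a fact used below.

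The crux is to show $\pi'$ is stationary for $P$. Fix $f\in bC$ with $f\ge0$. Using $\pi_{n'_k}=\pi_{n'_k}P_{n'_k}$ on $A_{n'_k}$, the domination $P_{n'_k}(x,\cdot)\ge P(x,\cdot)$ on $A_{n'_k}$, and $f\ge0$, one obtains
\[
\int_S f\,d\pi_{n'_k}=\int_{A_{n'_k}}\pi_{n'_k}(dx)\int_{A_{n'_k}}f(y)P_{n'_k}(x,dy)\ \ge\ \int_{A_{n'_k}}\pi_{n'_k}(dx)\int_{A_{n'_k}}f(y)P(x,dy).
\]
The difficulty is that $x\mapsto\int_{A_{n'_k}}f(y)P(x,dy)$ need not be continuous (the indicator of $A_{n'_k}$ is not), so I cannot apply \autoref{a3} and pass to the limit directly. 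To get around this I would introduce, for each $m\ge1$, the continuous cutoff $\chi_m(x)=\min\{1,(m+1-g(x))^{+}\}$, which equals $1$ on $A_m$ and vanishes off $A_{m+1}$; this is where continuity of $g$ enters. Then $f\chi_m\in bC$, and once $k$ is large enough that $A_{m+1}\subseteq A_{n'_k}$,
\[
\int_{A_{n'_k}}f(y)P(x,dy)\ \ge\ \int_{A_{n'_k}}f(y)\chi_m(y)P(x,dy)=\int_S f(y)\chi_m(y)P(x,dy)=(P(f\chi_m))(x),
\]
which lies in $bC$ by \autoref{a3}. Hence $\int_S f\,d\pi_{n'_k}\ge\int_S(P(f\chi_m))\,d\pi_{n'_k}$ for all large $k$. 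Letting $k\to\infty$ (weak convergence, $P(f\chi_m)\in bC$) gives $\int_S f\,d\pi'\ge\int_S(P(f\chi_m))\,d\pi'$, and then letting $m\to\infty$ (monotone convergence, $f\chi_m\uparrow f$, so $P(f\chi_m)\uparrow Pf$) yields $\int_S f\,d\pi'\ge\int_S(Pf)\,d\pi'$.

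Writing $\pi'P$ for the probability $\pi'P(\cdot)=\int\pi'(dx)P(x,\cdot)$, the inequality just obtained reads $\int f\,d\pi'\ge\int f\,d(\pi'P)$ for every nonnegative $f\in bC$. Applying this to both $f$ and $1-f$ for $f\in bC$ with $0\le f\le1$, and using $\pi'(S)=\pi'P(S)=1$, forces equality for such $f$, hence (by scaling and adding constants) $\int f\,d\pi'=\int f\,d(\pi'P)$ for all $f\in bC$; since bounded continuous functions determine finite Borel measures on a metric space, $\pi'=\pi'P$. Thus $\pi'$ is stationary for $P$, so $\pi'=\pi$ by uniqueness, and every subsequence of the tight sequence $(\pi_n)$ has a further subsequence converging weakly to $\pi$, whence $\pi_n\Rightarrow\pi$. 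The main obstacle is precisely this weak-limit passage in $\pi_n=\pi_nP_n$: it must simultaneously absorb the discrepancy between $P_n$ and $P$ (handled by keeping an inequality in the $f\ge0$ direction) and cope with the loss of weak continuity caused by restricting the kernel to $A_n$ (handled by the $g$-based cutoffs $\chi_m$ and the monotone passage $m\to\infty$); the remaining steps are routine.
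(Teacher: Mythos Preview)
Your argument is correct, and it proceeds along a genuinely different line than the paper's. The paper does not keep a one-sided inequality; instead it proves the two-sided estimate
\[
\Bigl|\int_S\pi_n(dx)(P_nf)(x)-\int_S\pi_n(dx)(Pf)(x)\Bigr|\le 2\|f\|\int_K\pi_n(dx)P(x,A_n^c)+2\epsilon\|f\|,
\]
and then spends most of the effort on a compactness-and-contradiction argument (using weak continuity of $P$ and continuity of $g$) to show $\sup_{x\in K}P(x,A_n^c)\to 0$. This gives equality for all $f\in bC$ directly, and the stronger intermediate fact $\int\pi_n(P_nf)-\int\pi_n(Pf)\to 0$.

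Your route avoids that uniform-escape lemma entirely: you exploit $P_n\ge P$ to keep only a Fatou-type inequality for nonnegative $f$, smooth the indicator of $A_n$ by the $g$-based cutoffs $\chi_m$ (this is where you use continuity of $g$), pass to the limit in $k$ using $P(f\chi_m)\in bC$, then in $m$ by monotone convergence, and finally recover equality from the mass identity $\pi'(S)=(\pi'P)(S)=1$. This is the continuous-state analogue of the Fatou argument the paper uses in the countable case (Theorem~\ref{thm-1}), so your proof is more elementary and more uniform with that section; the price is that you do not obtain the quantitative intermediate bound on $\int\pi_n(P_nf-Pf)$ that the paper's approach yields.
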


\begin{proof} In view of \autoref{thm-2}, let $(\pi_{n'_k}:k\geq 1)$ be a weakly convergent subsequence of $(\pi_n:n\geq 1)$, so that there exists a probability $\pi'$ on $S$ for which
\begin{align}
\pi_{n'_k}\Rightarrow\pi'\label{eq-32}
\end{align}
as $k\rightarrow\infty$. For $f\in bC$, it follows that
\begin{align}
\int_{A_{n_k'}}^{}\pi_{n_k'}(dx)f(x) = \int_{S}^{}\pi_{n_k'}(dx)f(x)\rightarrow \int_{S}^{}\pi'(dx)f(x)\label{eq33}
\end{align}
as $k\rightarrow\infty$. Since $\pi_{n_k'}\in\Pi_{n_k'}$,
\begin{align}
\int_{S}^{}\pi_{n_k'}(dx)(P_{n'_k}f)(x) = \int_{S}^{}\pi_{n_k'}(dx)f(x).\label{eq-34}
\end{align}

For $f\in bC$, let $\norm{f}_{}= \sup\{|f(x)|:x\in S\}$. Also, for $\epsilon>0$, the tightness of the sequence $(\pi_n:n\geq 1)$ guarantees the existence of a compact set $K=K(\epsilon)$ for which $\pi_n(K)\geq 1-\epsilon$ uniformly in $n\geq 1$. We can then write
\begin{align}
\biggr|\int_{S}^{}\pi_n&(dx)(P_nf)(x)-\int_{S}^{}\pi_n(dx)(Pf)(x)\biggr|\nonumber\\
&\leq \left|\int_{K}^{}\pi_n(dx)\left((P_nf)(x)-(Pf)(x)\right)\right|+ 2 \epsilon\norm{f}_{}\nonumber\\
&=\left|\int_{K\cap A_n}^{}\pi_n(dx)\left[\int_{A_n}^{}f(y)(P_n(x,dy)-P(x,dy))-\int_{A_n^c}^{}f(y)P(x,dy)\right]\right|\  + \nonumber\\
&\qquad\qquad\qquad\qquad\qquad\qquad\qquad  2\epsilon\norm{f}_{}\nonumber\\
&\leq \norm{f}_{}\int_{K\cap A_n}^{}\pi_n(dx)|P_n(x,A_n)-P(x,A_n)| \ + \nonumber\\
&\qquad\qquad\qquad\qquad\qquad  \int_{K\cap A_n}^{}\pi_n(dx)\int_{A_n^c}^{}|f(y)|P(x,dy) + 2\epsilon\norm{f}_{}\nonumber\\
&\leq \norm{f}_{}\int_{K\cap A_n}^{}\pi_n(dx)|P_n(x,A_n^c)-P(x,A_n^c)|\ + \nonumber \\
&\qquad\qquad\qquad\qquad\qquad  \norm{f}_{}\int_{K\cap A_n}^{}\pi_n(dx)P(x,A_n^c) + 2\epsilon\norm{f}_{}\nonumber\\
&\leq 2\norm{f}_{}\int_{K}^{}\pi_n(dx)P(x,A_n^c) + 2\epsilon\norm{f}_{}.\label{eq35}
\end{align}
We claim that
\begin{align}
\sup_{x\in K}P(x,A_n^c)\rightarrow 0\label{eq36}
\end{align}
as $n\rightarrow\infty$. To prove this, suppose that \eqref{eq36} does not hold. Then, there exists $\delta>0$ and a sequence $(x_n:n\geq 1)$ such that $x_n\in K$ and
\begin{align}
P(x_n, A_n^c)\geq \delta\label{eq37}
\end{align}
for $n\geq 1$. If $P_x(\cdot)\overset{\Delta}{=}P(\cdot|X_0=x)$, \eqref{eq37} is equivalent to requiring that
\begin{align*}
P_{x_n}(X_1\in A_n^c)\geq \delta
\end{align*}
or, in other words,
\begin{align}
P_{x_n}(g(X_1)>n)\geq \delta\label{eq38}
\end{align}
for $n\geq 1$. Because $K$ is compact, we can extract a subsequence $(x_{n_k}:k\geq 1)$ of $(x_n:n\geq 1)$ and $x_\infty\in K$ for which $x_{n_k}\rightarrow x_\infty$ and \eqref{eq38} is in force along the subsequence.

Choose $r\in \mathbb Z_+$ large enough that
\begin{align}
P_{x_\infty}(g(X_1)<r)\geq 1-\frac{\delta}{2}.\label{eq39}
\end{align}
For $h\in bC$, the continuity of $Ph$ (due to \autoref{a3}) implies that $(Ph)(x_{n_k})\rightarrow (Ph)(x_\infty)$ as $k\rightarrow\infty$. It follows that
\begin{align}
P_{x_{n_k}}(X_1\in \cdot)\Rightarrow P_{x_\infty}(X_1\in \cdot)\label{eq310}
\end{align}
as $k\rightarrow\infty$. The continuity of $g$ implies that $\{x:g(x)<r\}$ is open. The weak convergence statement \eqref{eq310} implies that
\begin{align}
\varliminf_{k\rightarrow\infty} P_{x_{n_k}}(g(X_1)<r) \geq P_{x_\infty}(g(X_1)<r)\geq 1-\frac{\delta}{2}.\label{eq311}
\end{align}
Hence, for $n_k>r$, 
\begin{align}
P_{x_{n_k}}(X_1\in A_{n_k}^c) &= P_{x_{n_k}}(g(X_1)>n_k)\nonumber\\
&\leq P_{x_{n_k}}(g(X_1)\geq r) \nonumber\\
&= 1-P_{x_{n_k}}(g(X_1)<r).\label{312}
\end{align}
As a consequence of \eqref{eq311} and \eqref{312}, we find that 
\begin{align*}
\varlimsup_{k\rightarrow\infty} P_{x_{n_k}}(X_1\in A_{n_k}^c)&\leq 1-\varliminf_{k\rightarrow\infty}P_{x_{n_k}}(g(X_1)<r)\\
&\leq 1-(1-\frac{\delta}{2})\leq \frac{\delta}{2},
\end{align*}
contradicting \eqref{eq38} and proving \eqref{eq36}.

With \eqref{eq36} in hand, we find from \eqref{eq35} that
\begin{align*}
\varlimsup_{n\rightarrow\infty} \left|\int_{S}^{}\pi_n(dx)(P_nf)(x) - \int_{S}^{}\pi_n(dx)(Pf)(x)\right|&\leq 2\epsilon\norm{f}_{}
\end{align*}
whenever $f\in bC$. Since $\epsilon$ was arbitrary, we conclude that
\begin{align}
\int_{S}^{}\pi_n(dx)(P_nf)(x)-\int_{S}^{}\pi_n(dx)(Pf)(x)\rightarrow 0\label{eq313}
\end{align}
as $n\rightarrow\infty$. Because of \autoref{a3}, $Pf\in bC$, so \eqref{eq-32} implies that
\begin{align}
\int_{S}^{}\pi_{n_k'}(dx)(Pf)(x)\rightarrow \int_{S}^{}\pi'(dx)(Pf)(x)\label{eq314}
\end{align}
as $k\rightarrow\infty$. In view of \eqref{eq33}, \eqref{eq-34}, \eqref{eq313}, and \eqref{eq314}, we conclude that 
\begin{align*}
\int_{S}^{}\pi'(dy)f(y) = \int_{S}^{}\int_{S}^{}\pi'(dx)P(x,dy)f(y)
\end{align*}
for each $f\in bC$, which implies that 
\begin{align*}
\pi'(dy) = \int_{S}^{}\pi'(dx)P(x,dy).
\end{align*}
So, $\pi'$ is therefore a stationary distribution of $P$, and must coincide with $\pi$ (due to the assumed uniqueness of $\pi$). So, $(\pi_n:n\geq 1)$ can only have one (weak) limit point, namely $\pi$, proving the theorem.
\end{proof}

The question of when $\Pi_n$ is non-empty (at least for $n$ sufficiently large) must be settled separately. One approach is to impose sufficient conditions on the augmentation $P_n$ so as to guarantee that the Markov chain having the one-step transition kernel $P_n$ is positive Harris recurrent on $A_n$.

The other obvious approach is to leverage ideas related to \autoref{a3}. In particular, it is known that if $P_n$ is weakly continuous as a one-step transition kernel on $A_n$, then the compactness of the state space $A_n$ guarantees that $\Pi_n$ is non-empty; see \citet{karrWeakConvergenceSequence1975}. However, if $P_n$ fails to be weakly continuous, then $\Pi_n$ may be empty, even when the augmentation is chosen to preserve as much continuity as possible.

\begin{example} Suppose that $S=[0,2]$, $A=[0,1]$, and
\begin{align*}
P(x,dy) = \begin{cases} \frac{1}{2} \delta_{1+x}(dy) + \frac{1}{2}\delta_2(dy), & 0\leq x\leq \frac{1}{2},\\
\frac{1}{2} \delta_{2-x}(dy) + \frac{1}{2}\delta_2(dy), & \frac{1}{2}\leq x\leq 2,
\end{cases}
\end{align*}
where $\delta_z(\cdot)$ is a unit point mass distribution at $z\in S$. Then, $P$ is weakly continuous on $S$. 
\end{example}

Of course,
\begin{align*}
P_x(X_1\not\in A) = \begin{cases} \frac{1}{2}, & x\in \{0,1\}\\
1, &  0<x<1
\end{cases}
\end{align*}
for $x\in A$. Suppose that our augmentation takes the form
\begin{align}
\tilde P(x,dy) = P(x,dy) + P_x(X_1\not\in A) G(x,dy)\label{eq315}
\end{align}
for $x,y\in A$, where $G=(G(x,dy):x,y\in A)$ is chosen to be weakly continuous. We claim that even when $G$ is so chosen, $\tilde P$ may fail to have a stationary distribution (despite the compactness of its state space). In particular, choose
\begin{align*}
G(x,dy) = \delta_{\frac{x}{2}}(dy)
\end{align*}
for $x,y\in A$. Then,
\begin{align*}
\tilde P(x,dy) &= \begin{cases} \frac{1}{2} \delta_1(dy) + \frac{1}{2}\delta_0(dy), & x=0;\\
\delta_{\frac{x}{2}}(dy), & 0<x<1;\\
\frac{1}{2}\delta_1(dy) + \frac{1}{2}\delta_{\frac{1}{2}}(dy),& x=1.
\end{cases}
\end{align*}

At any initial point $x\in A$, the Markov chain clearly converges weakly to $\delta_0$. As a consequence, for each $f\in bC$, $x\in A$, 
\begin{align*}
\int_{A}^{}f(y)\tilde P^n(x,dy)\rightarrow \int_{A}^{}f(y)\delta_0(dy) = f(0)
\end{align*}
as $n\rightarrow\infty$. Hence, if $\tilde P$ has a stationary distribution $\tilde \pi$, then for $f\in bC$, 
\begin{align*}
\int_{}^{}\tilde\pi (dy) f(y) = \int_{A}^{}\tilde \pi(dx)\int_{A}^{}\tilde P^n(x,dy) f(y)\rightarrow f(0)
\end{align*}
(by the Bounded Convergence Theorem). So, the only possible choice for $\tilde \pi$ is $\delta_0$. But
\begin{align*}
\int_{A}^{}\delta_0(dx) \tilde P(x,dy) = \tilde P(0, dy) \ne \delta_0,
\end{align*}
so $\delta_0$ is not a stationary distribution for $\tilde P$. This example proves that even when augmentations are constructed via \eqref{eq315}, the augmentation may fail to have a stationary distribution.\\

Hence, if one is relying on the compactness of $A_n$ to directly imply that $\Pi_n\neq \emptyset$ (rather than to use, for example, Harris recurrence), one must exercise care in ensuring that $P_n$ is weakly continuous, even when starting with a weakly continuous $P$.

We finish this section with a generalization of \autoref{thm2new}. For $S=\mathbb R_+$, we say that $X$ is stochastically monotone if $P_x(X_1>y)$ is non-decreasing in $x$ for each $y\geq 0$.  

\begin{theorem} Let $X$ be an $\mathbb R_+$-valued stochastically monotone Markov chain for which $F(\cdot,y)$ is a continuous function for each $y\geq 0$, where $F(x,y)\overset{\Delta}{=}P_x(X_1\leq y)$. Suppose $X$ has a unique stationary distribution $\pi=(\pi(dx):x\geq 0)$ for which $\pi[a,b)>0$ for all $a,b$ for which $0\leq a<b<\infty$. Then, if $A_n=[0,n]$, $\Pi_n$ is non-empty for $n\geq 1$, and $\pi_n\in \Pi_n$, we have that
\begin{align*}
\pi_n\Rightarrow\pi
\end{align*}
as $n\rightarrow\infty$.
\end{theorem}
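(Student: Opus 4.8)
The plan is to mimic the strategy of \autoref{thm2new}: use the stationary tail function of $X$ to build a non-decreasing coercive $r$ with $\pi r<\infty$, solve Poisson's equation to get a non-decreasing Lyapunov function $g$ satisfying a drift inequality of the form \eqref{eq-31}, and then invoke \autoref{thm-2} for tightness and \autoref{thm4} (or its monotone variant noted in the remark following \autoref{thm-2}) for the identification of the limit. Concretely, I would set $\bar P(x)=\pi[x,\infty)=P_x(X_1 \geq x)$... more carefully, $\bar P(x)=\pi([x,\infty))$, define $r(x)=\bar P(x)^{-1/2}$, and repeat the integral estimate \eqref{eq212-pir-finite} verbatim (it only uses monotonicity of $\bar P$ and telescoping) to conclude $\alpha\overset{\Delta}{=}\int r\,d\pi<\infty$. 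Here $\bar P$ is non-increasing and right-continuous, so $r$ is non-decreasing and coercive (its sublevel sets are intervals $[0,n]$-like sets $\{x: \bar P(x)\geq r(n)^{-2}\}$), and the positivity hypothesis $\pi[a,b)>0$ guarantees $\bar P(x)>0$ for every finite $x$, so $r$ is finite-valued.

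The next step is to produce the Lyapunov function. Setting $r_c=r-\alpha$, I would appeal to the result of \citet{glynnSolutionsPoissonEquation2022} — exactly as in the proof of \autoref{thm2new} — to get a finite-valued non-decreasing solution $\tilde g$ of $(P-I)\tilde g=-r_c$ on $\mathbb R_+$; the cited result should cover the continuous-state stochastically monotone case (the hypothesis that $F(\cdot,y)$ is continuous for each $y$ is the natural analogue of the discrete setup and should be what is needed there). Then $g(x)=\tilde g(x)-\tilde g(0)\geq 0$ is non-negative and non-decreasing and satisfies $(Pg)(x)=g(x)-r(x)+\alpha$, i.e.\ \autoref{a2} holds with $b=\alpha$ and this coercive $r$. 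Because $g$ is non-decreasing, its sublevel sets are of the form $[0,c_n]$ for some $c_n\nearrow\infty$; these may not be exactly $[0,n]$, but they are nested intervals exhausting $\mathbb R_+$, and crucially $\sup_{z\in[0,n]}g(z)=g(n)\leq \inf_{z\in[0,n]^c}g(z)$ since $g$ is monotone — so the remark following \autoref{thm-2} applies with the truncation sets $A_n=[0,n]$ directly, without having to re-index.

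For the conclusion I would first invoke \autoref{thm-2} (in the strengthened form of the remark, with $A_n=[0,n]$) to get that $(\pi_n:n\geq1)$ is tight. To upgrade tightness to $\pi_n\Rightarrow\pi$, the cleanest route is to check the hypotheses of \autoref{thm4}: $g$ is continuous? Here I should be careful — $g$ need not be continuous, only non-decreasing, so \autoref{thm4} does not apply verbatim. Instead I would argue directly in the spirit of \autoref{thm4}'s proof: by stochastic monotonicity and the continuity of $F(\cdot,y)$, the map $x\mapsto P_x(X_1\in\cdot)$ is weakly continuous on $\mathbb R_+$ (this is the analogue of \autoref{a3}; one checks $(Ph)(x)=\int h\,dF(x,\cdot)$ is continuous in $x$ for $h\in bC$ via an integration-by-parts / dominated-convergence argument using right-continuity and continuity of $F(\cdot,y)$), and then the argument \eqref{eq-32}--\eqref{eq313} goes through provided $\sup_{x\in K}P(x,A_n^c)\to0$ for compact $K=[0,M]$. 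That last estimate is where the monotone structure pays off: $P(x,A_n^c)=1-F(x,n)\leq 1-F(M,n)$ for $x\leq M$, and $1-F(M,n)=P_M(X_1>n)\to0$ as $n\to\infty$, so $\sup_{x\in K}P(x,A_n^c)\leq P_M(X_1>n)\to0$, which replaces the whole compactness-extraction argument \eqref{eq37}--\eqref{312} of \autoref{thm4}. With that in hand, any weak limit point $\pi'$ of $(\pi_n)$ satisfies $\int f\,d\pi'=\int (Pf)\,d\pi'$ for all $f\in bC$, hence is stationary, hence equals $\pi$ by uniqueness; tightness plus a unique limit point gives $\pi_n\Rightarrow\pi$.

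The main obstacle I anticipate is the regularity of $g$: unlike the finite-dimensional intuition, the Poisson solution $\tilde g$ is only guaranteed non-decreasing (hence Borel and a.e.\ continuous), not continuous, so one cannot cite \autoref{thm4} as a black box and must instead redo its proof, being careful that (i) the drift bound \eqref{eq-31} holds at every $x$ (which it does, since Poisson's equation holds pointwise), (ii) the sublevel sets $K_n(g)$ are compact — true because they are closed bounded intervals, using that $g$ non-decreasing and finite-valued forces $g(x)\to\infty$, which in turn follows from $r$ being coercive and the drift inequality (exactly the argument $K_m(g)\subseteq K_{m+b}(r)$ from the text), and (iii) the weak-continuity substitute for \autoref{a3} holds. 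A secondary point to verify is that the result of \citet{glynnSolutionsPoissonEquation2022} is indeed stated for continuous state space and for a non-decreasing right-hand side that is merely coercive rather than bounded; if it requires $r_c$ bounded one would instead run the tightness argument with a truncated $r$ and pass to the limit, but I expect the cited paper already handles the unbounded non-decreasing case.
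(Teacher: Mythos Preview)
Your outline is essentially sound but takes a genuinely different route from the paper's, and the principal risk is exactly the step you already flag.

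The paper does \emph{not} invoke the Poisson-equation result of \citet{glynnSolutionsPoissonEquation2022} in continuous state space. Instead it builds the Lyapunov function by hand: after constructing a \emph{continuous} coercive $r$ (by applying the discrete tail estimate of \autoref{thm2new} to the masses $\pi([k,k+1))$ and then linearly interpolating), it sets $k(x)=E_x\sum_{j=0}^{T_1-1}r(X_j)$ with $T_1$ the first return to $[0,1]$, uses an ergodic--theorem/regenerative identity to show $\int_{[0,1]}k(x)\,\pi(dx)<\infty$, and then an excursion-plus-monotonicity argument to push finiteness of $k$ to every $x$. The continuity hypothesis on $F(\cdot,y)$ is used to show $k$ is continuous on $(1,\infty)$; after flattening on $[0,1]$ one obtains a \emph{continuous} non-decreasing $\tilde k$ satisfying \autoref{a2}, so \autoref{thm-2} and \autoref{thm4} apply directly, with no reworking of the limit-identification step.

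Your route---$r(x)=\bar P(x)^{-1/2}$ on $\mathbb R_+$ and an off-the-shelf non-decreasing Poisson solution $g$---is more economical, and your replacement of the compactness extraction in the proof of \autoref{thm4} by the one-line monotone bound $P(x,A_n^c)\le P_M(X_1>n)$ for $x\le M$ is in fact cleaner than the general argument there. The gap is the one you anticipate: the citation to \citet{glynnSolutionsPoissonEquation2022} in the proof of \autoref{thm2new} is for $\mathbb Z_+$, and for the present theorem the authors conspicuously avoid that citation and construct the Lyapunov function explicitly. So you must either (a) verify that the cited result really delivers a finite-valued, pointwise, non-decreasing solution of $(P-I)\tilde g=-r_c$ on $\mathbb R_+$ for unbounded non-decreasing $r_c$ under only weak continuity of $P$, or (b) substitute the paper's explicit construction $k(x)=E_x\sum_{j=0}^{T_1-1}r(X_j)$, which then yields a continuous Lyapunov function and lets you quote \autoref{thm4} as stated. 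Everything else in your outline---tightness via the monotone remark after \autoref{thm-2} with $A_n=[0,n]$, weak continuity of $P$ from pointwise continuity of $F(\cdot,y)$, and the identification of the limit---is correct.
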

\begin{proof} We start by recognizing that we can apply the argument of \autoref{thm2new} to establish the existence of a strictly increasing sequence $(r(k):k\geq 1)$ converging to infinity such that
\begin{align*}
\sum_{k=0}^{\infty}r(k+1)\pi([k,k+1))<\infty.
\end{align*}
By setting $r(0)=0$ and defining $r(\cdot)$ between consecutive integers via linear interpolation, we construct a continuous function $r(\cdot)$ such that
\begin{align*}
\int_{\mathbb R_+}^{}r(x)\pi(dx)<\infty.\label{eq316}
\end{align*}
We next take advantage of the uniqueness of $\pi$ to guarantee that the shift operator defined by $\theta\circ X =(X_{1+n}:n\geq 0)$ is not only measure-preserving when $X_0$ has distribution $\pi$ but is also ergodic; see p. 141 of \citet{ashTopicsStochasticProcesses1975}. Let $T_0=0$ and $T_{i+1}=\inf\{n>T_i:X_n\leq 1\}$ for $i\geq 0$. The ergodic theorem for stationary sequences implies that
\begin{align}
\frac{1}{T_n}\sum_{j=0}^{T_n-1}r(X_j) \rightarrow \int_{\mathbb{R}_+}^{}r(x)\pi(dx)\qquad \text{a.s.}
\end{align}
and 
\begin{align*}
\frac{\sum_{j=0}^{n-1}\sum_{i=T_j}^{T_{j+1}-1}r(X_i)}{\sum_{j=0}^{n-1}(T_{j+1}-T_j)} \rightarrow \frac{\int_{[0,1]}^{}\pi(dx)E_x\sum_{i=0}^{T_1-1}r(X_i)}{\int_{[0,1]}^{}\pi(dx)E_xT_1}\qquad \text{a.s.}
\end{align*}
as $n\rightarrow\infty$, so that we may conclude that 
\begin{align*}
\int_{\mathbb{R}_+}^{}r(x)\pi(dx) = \frac{\int_{[0,1]}^{}\pi(dx)E_x\sum_{j=0}^{T_1-1}r(X_j)}{\int_{[0,1]}^{}\pi(dx)E_xT_1}.
\end{align*}
In view of \eqref{eq316}, we find that
\begin{align}
\int_{[0,1]}^{}\pi(dx)k(x)<\infty\label{eq317}
\end{align}
where
\begin{align*}
k(x)\overset{\Delta}{=}E_x\sum_{j=0}^{T_1-1}r(X_j).
\end{align*}
Let $\beta_n=\inf\{j\geq 0: X_j>n\}$ and observe that the ergodic theorem implies that $\beta_n<\infty$ a.s. when $X_0$ has distribution $\pi$. It is therefore evident that the set of $x$-values in $[0,1]$ for which $P_x(\beta_n<T_1)>0$ must have positive $\pi$-probability and the set of $x$-values for which $k(x)<\infty$ must have $\pi$-probability one, so that there exists $w\leq 1$ for which $P_w(\beta_n<T_1)>0$ and $k(w)<\infty$. Hence,
\begin{align*}
\int_{(n,\infty)}^{}P_w(X_{\beta_n}\in dy, \beta_n<T_1)k(y)\leq k(w)<\infty,
\end{align*}
so that there exists $y_n>n$ for which $k(y_n)<\infty$.

The stochastic monotonicity of $X$ and the monotonicity of $r$ imply that $k(\cdot)$ is a non-decreasing function, so that $k(z)<\infty$ for $z\leq y_n$, and hence $k(z)<\infty$ for all $z\in\mathbb \R_+$. Furthermore, the continuity of $r$ and weak continuity of $P$ ensure that $k$ is continuous on $(1,\infty)$; see \citet{glynnSolutionsPoissonEquation2022} for related arguments. 

Conditioning on $X_1$ establishes that for $x>1$,
\begin{align*}
k(x)=r(x)+\int_{(1,\infty)}^{}P_x(X_1\in dy)k(y).
\end{align*}
So, if we set $\tilde k(x)=k(x)$ for $x>1$ and $\tilde k(x)=k(1+)$ for $x\leq 1$, we find that
\begin{align*}
(P\tilde k)(x)&= k(1+) P_x(X_1\leq 1) + \int_{(1,\infty)}^{}P_x(X_1\in dy)k(y)\\
&\leq k(1+) + \int_{(1,\infty)}^{}P_{1+}(X_1\in dy) k(y)\\
&=k(1+) + k(1+) -r(1)\\
&=k(1+)+\tilde k(x) -r(1)
\end{align*}
for $x\in\mathbb [0,1]$. As a result, \autoref{a2} is satisfied with a continuous $\tilde k$, so that we can apply Theorems \ref{thm-2} and \ref{thm4}. Finally, observe that because $\tilde k$ is non-decreasing, it is evident that when $A_n=[0,n]$,
\begin{align*}
\sup_{x\in A_n}\tilde k(x) \leq \inf_{x\in A_n^c}\tilde k(x),
\end{align*}
so that $A_n$'s of this special form are legitimate truncation sets.
\end{proof}

This result generalizes \citet{gibsonMonotoneInfiniteStochastic1987} and \citet{tweedieTruncationApproximationsInvariant1998} to continuous state space. In particular, for stochastically monotone Markov chains, general augmentations yield convergent approximations when the truncation sets $A_n$ are chosen to be of the form $A_n=[0,n]$.

\section{Convergence for Strongly Uniformly Recurrent Markov Chains}\label{sec-4}
In this section, we use the theory of regeneration to establish convergence of general augmentation schemes for a class of general state space Markov chains that generalize the theory developed for Markov chains having Markov transition matrices. In contrast to the earlier sections, the theory developed here pertains to arbitrary truncation sequences, so that $(A_n:n\geq 1)$ can be any sequence for which $\emptyset\neq A_1 \subseteq A_2 \subseteq ...$ for which $\cup_{n=1}^\infty A_n=S$. 

We say that $X=(X_n:n\geq 0)$ is \emph{strongly uniformly recurrent} if there exists $\lambda>0$ and a probability $\phi$ such that
\begin{align}
P(x,dy) \geq \lambda \phi(dy)\label{eq41new}
\end{align}
for $x,y\in S$.
\begin{remark} Uniform recurrence requires the existence of $m\geq 1$ such that 
\begin{align}
P_x(X_m\in dy) \geq \lambda\phi(dy)\label{eq42new}
\end{align}
for $x,y\in S$, so that \eqref{eq41new} is clearly a strong version of \eqref{eq42new}. We note that \autoref{example-1} satisfies \eqref{eq42new} with $m=2$, and yet presents a setting in which general augmentation schemes can fail to converge. So, some condition (like strong uniform recurrence) is needed beyond uniform recurrence in order to guarantee convergence for general augmentations.
\end{remark}

Without any real loss of generality, we may assume that $\phi$ is supported on $A_1$ (at the possible cost of needing to reduce $\lambda$ in \eqref{eq41new}). As in our earlier sections, we allow a general augmentation $P_n=(P_n(x,dy):x,y\in A_n)$ associated with $A_n$ to take the form
\begin{align*}
P_n(x,dy) = P(x,dy) + \tilde R_n(x,dy)
\end{align*}
for $x,y\in A_n$, where $P_n$ is a stochastic kernel, and $\tilde R_n$ is a non-negative kernel.

In the presence of \eqref{eq41new}, \citet{athreyaNewApproachLimit1978} and \citet{nummelinSplittingTechniqueHarris1978} observed that the transition kernel $P$ can be put in the form
\begin{align}
P(x,dy) = \lambda\phi(dy) + (1-\lambda)H(x,dy)\label{eq43new}
\end{align}
for $x,y\in S$, where $H=(H(x,dy):x,y\in S)$ is a stochastic kernel. (Note that $H$ is defined so as to make \eqref{eq43new} valid.) With the ``splitting representation'' \eqref{eq43new} in hand, we can see that the right-hand side is a mixture of two distributions. Consequently, we have a probabilistic mechanism for how to envision the transitions of the Markov chain that evolves according to $P$. In particular, if $X_n=x$, we first flip a $\lambda$-coin having probability of ``heads'' given by $\lambda$. If the coin comes up heads, we distribute $X_{n+1}$ according to $\phi$, and $X$ regenerates at that time. Otherwise, we distribute $X_{n+1}$ according to $H(X_n, \cdot)$. If we let $P_\phi(\cdot)$ and $E_\phi(\cdot)$ denote the probability and expectation under which $X_0$ has distribution $\phi$, the theory of regenerative processes (see, for example, \citet{smithRegenerativeStochasticProcesses1955}) asserts that the unique stationary distribution $\pi$ associated with $P$ is given by
\begin{align}
\pi(\cdot) = \frac{E_\phi \sum_{j=0}^{\tau-1}I(X_j\in \cdot)}{E_\phi\tau}\label{eq44new}
\end{align}
where $\tau$ is the first time at which $X$ distributes itself according to $\phi$.

Because $\phi$ is supported on $A_1$, we note that
\begin{align}
P_n(x,dy) = \lambda\phi(dy) + q_n(x)H(x,dy) + r_n(x)R_n(x,dy)\label{eq45new}
\end{align}
where $\lambda+q_n(x)+r_n(x)=1$ for $x\in A_n$, $q_n(x),r_n(x)\geq 0$ and $R_n=(R_n(x,dy):x,y\in A_n)$ is a stochastic kernel. In view of \eqref{eq45new}, we can view the transitions of $X$ under $P_n$ as being implemented through a more complex randomization. In particular, if $X_m=x$, then with probability $\lambda$, $X_{m+1}$ distributes itself according to $\phi$, and $X$ regenerates. On the other hand, with probability $q_n(X_m)$, $X_{m+1}$ distributes itself according to $H(X_m,\cdot)$, while with probability $r_n(X_m)$, $X_{m+1}$ distributes itself according to $R_n(X_m,\cdot)$. As in the discussion of the dynamics of $X$ under the transition kernel $P$, we let $\tau$ be the first time at which $X$ regenerates and distributes itself according to $\phi$. Also, let $\beta_n-1$ be the first time at which $X_{m+1}$ is drawn from the distribution $R_n(X_m,\cdot)$, so that the conditional distribution of $\beta_n$ has probability mass function
\begin{align}
P(\beta_n=k+1|X_0,X_1,...,X_{k})&= \prod_{j=0}^{k-1}(1-r_n(X_j))r_n(X_k).\label{eq46new}
\end{align}
A final key observation is that 
\begin{align}
P_\phi((X_0,X_1,...,X_k)\in \cdot, \tau\land \beta_n>k) = P_\phi^n((X_0,...,X_k)\in\cdot, \tau\land\beta_n>k)\label{eq47new}
\end{align}
for $k\geq 0$, where $P_\phi^n(\cdot)$ is the probability under which $X_0$ has distribution $\phi$ and $X$ evolves under $P_n$.

We are now ready to state our main result.

\begin{theorem} Suppose that $X$ is strongly uniformly recurrent under $P$ with a unique stationary distribution $\pi$. Then, $X$ is strongly uniformly recurrent under $P_n$ with a unique stationary distribution $\pi_n$, and
\begin{align*}
\sup_{A\subseteq S}|\pi_n(A)-\pi(A)|\rightarrow 0
\end{align*}
as $n\rightarrow\infty$.
\end{theorem}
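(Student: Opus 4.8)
The plan is to exploit the splitting representation \eqref{eq45new} and the regenerative formula \eqref{eq44new}, comparing the regeneration cycles of $X$ under $P$ with those under $P_n$ by coupling them on the event that the cycle terminates before any draw from the ``bad'' kernel $R_n$. First I would write, using \eqref{eq44new} applied to both $P$ and $P_n$ (the latter being legitimate since strong uniform recurrence clearly passes from $P$ to $P_n$, as $P_n(x,dy)\ge P(x,dy)\ge\lambda\phi(dy)$, so each $P_n$ has a unique stationary $\pi_n$),
\begin{align*}
\pi(\cdot) = \frac{E_\phi\sum_{j=0}^{\tau-1}I(X_j\in\cdot)}{E_\phi\tau},\qquad
\pi_n(\cdot) = \frac{E_\phi^n\sum_{j=0}^{\tau-1}I(X_j\in\cdot)}{E_\phi^n\tau}.
\end{align*}
The key structural fact is \eqref{eq47new}: on the event $\{\tau\wedge\beta_n>k\}$ the laws of $(X_0,\dots,X_k)$ under $P_\phi$ and $P_\phi^n$ agree. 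Hence for any $A$,
\begin{align*}
\left|E_\phi\sum_{j=0}^{\tau-1}I(X_j\in A) - E_\phi^n\sum_{j=0}^{\tau-1}I(X_j\in A)\right|
\le E_\phi[\tau;\tau\ge\beta_n] + E_\phi^n[\tau;\tau\ge\beta_n],
\end{align*}
and similarly $|E_\phi\tau - E_\phi^n\tau|\le E_\phi[\tau;\tau\ge\beta_n] + E_\phi^n[\tau;\tau\ge\beta_n]$. So it suffices to show that both $E_\phi[\tau;\tau\ge\beta_n]$ and $E_\phi^n[\tau;\tau\ge\beta_n]$ tend to $0$ as $n\to\infty$, together with the fact that $E_\phi\tau\le 1/\lambda<\infty$ (geometric, since $\lambda$-coin flips are i.i.d.\ successes) bounds the denominators away from $0$ and $\infty$ uniformly in $n$.

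Next I would prove $\beta_n\to\infty$ in the appropriate sense. Under $P_\phi$, the chain $X$ evolving under $P$ has $\tau<\infty$ a.s.\ with $E_\phi\tau\le 1/\lambda$, and the cycle path $(X_0,\dots,X_{\tau-1})$ is a fixed (defective-in-$n$-free) random object. For each $m$, the probability that a draw from $R_n$ occurs before time $\tau\wedge m$ is controlled by $\sum_{j<\tau\wedge m} r_n(X_j)$, and the crucial point is that $r_n(x)\le P(x,A_n^c)$: a ``bad'' transition from $x$ can only happen when $P$ would have sent the chain outside $A_n$, since on $A_n$ one has $P_n(x,dy)\ge P(x,dy)$ with the excess mass exactly the exit mass $P(x,A_n^c)$. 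Because $A_n\nearrow S$, for each fixed $x$ we have $P(x,A_n^c)\to0$, and along a single cycle the states visited form a tight (indeed a.s.\ finite) set, so $\sum_{j<\tau} r_n(X_j)\to 0$ a.s.; a dominated-convergence argument (dominated by $\tau$, which is integrable) then gives $P_\phi(\beta_n\le\tau)\to0$ and more quantitatively $E_\phi[\tau;\tau\ge\beta_n]\to0$. Handling the $P_\phi^n$-expectation requires a small extra step: on $\{\tau\ge\beta_n\}$ the two measures differ, so I would instead argue that $P_\phi^n(\beta_n=k+1,\tau>k)=P_\phi(\beta_n=k+1,\tau>k)$ from \eqref{eq46new}–\eqref{eq47new} (the event $\{\beta_n=k+1,\tau>k\}$ only depends on $X_0,\dots,X_k$ and the auxiliary randomization up to step $k$, all of which are governed identically), and then $E_\phi^n[\tau;\tau\ge\beta_n]$ can be split at $\beta_n$, with the pre-$\beta_n$ portion matching $P_\phi$ and the post-$\beta_n$ portion bounded via the strong Markov property by $(\sup_x E_x^n\tau)\,P_\phi(\tau\ge\beta_n)\le \lambda^{-1}P_\phi(\tau\ge\beta_n)\to0$, since under $P_n$ a $\lambda$-coin is still flipped every step.

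Finally I would assemble the pieces: combining the two displayed inequalities with the denominator bound $\lambda\le E_\phi\tau, E_\phi^n\tau\le 1/\lambda$ (more carefully, $E_\phi\tau\ge1$ and similarly for $P_n$) yields
\begin{align*}
\sup_{A\subseteq S}|\pi_n(A)-\pi(A)| \le C\left(E_\phi[\tau;\tau\ge\beta_n] + E_\phi^n[\tau;\tau\ge\beta_n]\right)\to0,
\end{align*}
for a finite constant $C$ depending only on $\lambda$. Note the supremum over all $A\subseteq S$ comes for free because the numerator comparison is uniform in $A$ — that is the content of working with the occupation-measure representation rather than with individual test functions. I expect the main obstacle to be the bookkeeping in the second step: making precise the coupling of the auxiliary randomizations (the $\lambda$-coin and the $q_n/r_n$ split) so that $\{\beta_n>k,\tau>k\}$ is genuinely a common event for $P_\phi$ and $P_\phi^n$, and then getting the dominated-convergence / strong-Markov estimate for the $P_\phi^n$-expectation $E_\phi^n[\tau;\tau\ge\beta_n]$ cleanly, since under $P_\phi^n$ the post-$\beta_n$ dynamics are the genuinely different $R_n$-dynamics and one must not accidentally assume anything about them beyond the surviving $\lambda\phi$ component.
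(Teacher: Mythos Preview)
Your proposal is correct and follows essentially the same regenerative--coupling argument as the paper: both use the representation \eqref{eq44new} for $\pi$ and $\pi_n$, split the cycle at $\beta_n$ via \eqref{eq47new}, and reduce matters to showing $P_\phi(\tau>\beta_n)=P_\phi^n(\tau>\beta_n)\to0$ by dominated convergence from $r_n(x)\downarrow0$. The only cosmetic difference is that the paper controls $E_\phi^n[\tau\,I(\tau>\beta_n)]$ with Cauchy--Schwarz and the explicit geometric second moment $E_\phi^n\tau^2\le2/\lambda^2$, whereas you split at $\beta_n$ and use the strong Markov bound $\sup_x E_x^n\tau\le1/\lambda$; both routes work.
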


Note that $\pi_n$ converges to $\pi$ in total variation norm (rather than in the sense of weak convergence used elsewhere in this paper), regardless of how the $A_n$'s are chosen, and regardless of how the sequence of augmentations is defined.

\begin{proof} We first recognize that since $X$ regenerates at time $\tau$ under $P_\phi^n$, $\pi_n$ can be expressed in terms of regenerative cycle quantities (as in \eqref{eq44new}) as
\begin{align*}
\pi_n(\cdot) = \frac{E_\phi^n\sum_{j=0}^{\tau-1}I(X_j\in \cdot)}{E_\phi^n\tau},
\end{align*}
where $E_\phi^n(\cdot)$ is the expectation associated with $P_\phi^n$. We now couple the dynamics of $X$ under $P_\phi^n$ to its evolution under $P_\phi$ by drawing $X_m$ from $H(X_{m-1},\cdot)$ under $P_\phi$ whenever we draw $X_m$ from $R_n(X_{m-1},\cdot)$ under $P_\phi^n$. Then, on account of \eqref{eq47new},
\begin{align*}
|E_\phi^n&\sum_{j=0}^{\tau-1}I(X_j\in \cdot)-E_\phi\sum_{j=0}^{\tau-1}I(X_j\in \cdot)| \\
&= |\sum_{j=0}^{\infty}P_\phi^n(X_j\in\cdot, \beta_n\land \tau>j) + E_\phi^n\sum_{j=\beta_n}^{\tau-1}I(X_j\in \cdot, \tau>\beta_n)\\
&\qquad -\sum_{j=0}^{\infty}P_\phi(X_j\in \cdot, \beta_n\land \tau>j) - E_\phi \sum_{j=\beta_n}^{\tau-1}I(X_j\in\cdot, \tau>\beta_n)|\\
&\leq E_\phi^n(\tau-\beta_n)I(\tau>\beta_n) + E_\phi (\tau-\beta_n)I(\tau>\beta_n)\\
&\leq E_\phi^n\tau I(\tau>\beta_n) + E_\phi \tau I(\tau>\beta_n)\\
&\leq (E_\phi^n\tau^2)^{1/2} P_\phi^n(\tau>\beta_n)^{1/2} + (E_\phi \tau^2)^{1/2}P_\phi(\tau>\beta_n)^{1/2}.
\end{align*}
Since $\tau$ is geometric with parameter $\lambda$ under both $P_\phi$ and $P_\phi^n$, $E_\phi^n\tau^2 = E_\phi \tau^2\leq 2/\lambda^2$. On the other hand,
\begin{align*}
P_\phi^n(\tau>\beta_n) &= \sum_{j=1}^{\infty}P_\phi^n(\beta_n=j, \tau>j)\\
&=\sum_{j=1}^{\infty}E_\phi^n\prod_{k=0}^{j-2} q_n(X_k) r_n(X_{j-1})\\
&=\sum_{j=1}^{\infty}E_\phi\prod_{k=0}^{j-2} q_n(X_k) r_n(X_{j-1})\\
&= P_\phi(\tau>\beta_n).
\end{align*}
Since $q_n(X_k)\leq 1-\lambda$, it follows that
\begin{align*}
\prod_{k=0}^{j-1} q_n(X_k)r_n(X_k) \leq (1-\lambda)^j.
\end{align*}
But $r_n(X_j)\downarrow 0$ a.s. as $n\rightarrow\infty$, so the Dominated Convergence Theorem implies that $P_\phi(\tau>\beta_n)=P_\phi^n(\tau>\beta_n)\rightarrow 0$ as $n\rightarrow\infty$. Consequently, 
\begin{align*}
\sup_B |E_\phi^n \sum_{j=0}^{\tau-1}I(X_j\in B) -E_\phi \sum_{j=0}^{\tau-1}I(X_j\in B)|\rightarrow 0
\end{align*}
and this implies that $E_\phi^n\tau\rightarrow E_\phi \tau$ as $n\rightarrow\infty$, thereby proving the theorem.
\end{proof}

\section{Convergence of General Augmentations for Markov Jump Processes}\label{sec-5}
We now briefly describe the extension of our discrete time theory to the setting of Markov jump processes. In particular, suppose that $S$ is a finite or countably infinite state space. We say that $Q=(Q(x,y):x,y\in S)$ is a \emph{rate matrix} if $Q(x,y)\geq 0$ for $x\neq y$,
\begin{align*}
\lambda(x)\overset{\Delta}{=}\sum_{y\neq x}^{}Q(x,y)<\infty
\end{align*}
and
\begin{align*}
\lambda(x)=-Q(x,x)
\end{align*}
for $x\in S$. We shall assume that the associated Markov jump process $X$ is non-explosive; \citet{meynStabilityMarkovianProcesses1993} provide a Lyapunov condition that guarantees non-explosiveness. In the presence of non-explosiveness, $X=(X(t):t\geq 0)$ can be realized as a stochastic process having piecewise constant paths that are right continuous. \citet{millerStationarityEquationsContinuous1963} shows that when $X$ is non-explosive, then positive recurrence is equivalent to finding a stationary distribution $\pi=(\pi(x):x\in S)$ satisfying
\begin{align}
\pi Q=0,\label{eq41}
\end{align}
in which case $\pi$ is the stationary distribution of $X$.

For a given truncation $A_n\subseteq S$, we say that $Q_n$ is an \emph{augmentation} of $Q$ if $Q_n$ is a rate matrix on $A_n$ for which $Q_n(x,y)\geq Q(x,y)$ for all $x\neq y$ with $x,y\in A_n$.

\begin{Qtheorem}\label{a3new} There exist non-negative coercive functions $r=(r(x):x\in S)$ and $g=(g(x):x\in S)$, and $b<\infty$ for which
\begin{align*}
(Qg)(x)\leq -r(x)+b
\end{align*}
for $x\in S$.
\end{Qtheorem}
Assumption \autoref{a3new} is the continuous-time analog of \autoref{a1}. The continuous-time analog of \autoref{thm-1} is our final result.

\begin{theorem} \label{thm6} Suppose $X$ is an irreducible non-explosive Markov jump process with rate matrix $Q$ and probability $\pi$ satisfying \eqref{eq41}. If \autoref{a3new} is in force, then for any augmentation sequence $(Q_n:n\geq 1)$ for which $A_n=K_n(g)$, we have that
\begin{align*}
\pi_n\Rightarrow \pi
\end{align*}
as $n\rightarrow\infty$, provided that $\pi_n$ is a probability satisfying $\pi_nQ_n=0$.
\end{theorem}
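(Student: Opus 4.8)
The plan is to follow the proof of \autoref{thm-1} almost step for step — first show that $(\pi_n:n\ge1)$ is tight, then that $\pi$ is its only limit point — adapting each step from transition matrices to rate matrices. The first task is to extend the Lyapunov inequality uniformly over the augmentations: for every $n$ and every $x\in A_n$, $(Q_ng)(x)\le(Qg)(x)\le -r(x)+b$. Since $A_n=K_n(g)$ one has $\sup_{z\in A_n}g(z)\le n\le\inf_{z\in A_n^c}g(z)$; writing $Q_n(x,\cdot)=Q(x,\cdot)+R_n(x,\cdot)$ with $R_n(x,\cdot)\ge0$ the nonnegative, total-rate-$\sum_{y\in A_n^c}Q(x,y)$ perturbation that turns the sub-conservative northwest corner into a rate matrix — so in particular $Q_n(x,x)\ge Q(x,x)$, the continuous-time counterpart of the requirement in \autoref{sec-2} that $P_n(x,y)\ge P(x,y)$ for \emph{all} $x,y$ — the telescoping computation of \eqref{eq25} goes through verbatim and gives $\sum_{y\in A_n}Q_n(x,y)g(y)\le\sum_{y\in A_n}Q(x,y)g(y)+(\inf_{z\in A_n^c}g(z))\sum_{y\in A_n^c}Q(x,y)\le(Qg)(x)$. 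The choice $A_n=K_n(g)$ is used in a critical way here, exactly as in \autoref{thm-1}.

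Because $A_n$ is finite and $\pi_nQ_n=0$, I may interchange the (finite) sums to get $\sum_{x\in A_n}\pi_n(x)(Q_ng)(x)=\sum_{y\in A_n}g(y)(\pi_nQ_n)(y)=0$; on the finite set $A_n$ this makes the appeal to Corollary~4 of \citet{glynnBoundingStationaryExpectations2008} in \autoref{thm-1} immediate. Combined with the bound of the previous paragraph, $\sum_{x\in A_n}\pi_n(x)r(x)\le b$ for every $n$. Markov's inequality then gives $\pi_n(K_m(r)^c)\le b/m$ for each $m$, and coerciveness of $r$ makes $K_m(r)$ finite, so $(\pi_n:n\ge1)$ is tight; by Prohorov's theorem every subsequence has a further subsequence $(\pi_{n_k})$ with $\pi_{n_k}(x)\to\pi'(x)$ for each $x$, where $\pi'$ is a probability on $S$.

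The step I expect to be the main obstacle is showing $\pi'Q=0$, since, unlike in \autoref{thm-1}, $Q$ has a negative diagonal and possibly unbounded entries, so $\pi'Q$ is not automatically a well-defined convergent sum and the discrete-time Fatou argument does not transfer verbatim. I would first note that for each \emph{fixed} $y$ the escaped rate $\sum_{z\in A_n^c}Q(y,z)\to0$ (because $\lambda(y)<\infty$ and $A_n\uparrow S$), hence $Q_n(y,z)\to Q(y,z)$ for every fixed pair $y,z$, diagonal included. Passing to the limit along $(n_k)$ in $0=\pi_{n_k}(y)Q_{n_k}(y,y)+\sum_{x\ne y,\,x\in A_{n_k}}\pi_{n_k}(x)Q_{n_k}(x,y)$ — the diagonal term converges to $\pi'(y)Q(y,y)$, and Fatou's lemma handles the nonnegative off-diagonal sum — yields $(\pi'Q)(y)\le0$ for every $y$, and in particular $\sum_{x\ne y}\pi'(x)Q(x,y)<\infty$. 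Summing over $y$ and using $Q\mathbf1=0$ gives $\sum_y(\pi'Q)(y)=0$, which forces $(\pi'Q)(y)=0$ for every $y$; uniqueness of $\pi$ then gives $\pi'=\pi$, and since every limit point of $(\pi_n)$ equals $\pi$ we conclude $\pi_n\Rightarrow\pi$ (equivalently, by Scheffé, $\sum_x|\pi_n(x)-\pi(x)|\to0$).

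The delicate point — and the place where the unboundedness of the jump rates bites — is the last interchange of summation, which is legitimate precisely when $\sum_x\pi'(x)\lambda(x)<\infty$. If the rates are uniformly bounded, $\lambda\le\Lambda$, this is automatic, and in fact the whole argument then collapses: uniformizing, $\bar P\overset{\Delta}{=}I+Q/\Lambda$ satisfies \autoref{a1} (with $r/\Lambda$ in place of $r$) and $\bar P_n\overset{\Delta}{=}I+Q_n/\Lambda$ is a general augmentation of the northwest corner of $\bar P$ on $A_n=K_n(g)$ with stationary distribution $\pi_n$, so \autoref{thm-1} applies directly. For general non-explosive chains I would instead route the upgrade of $\pi'Q\le0$ to $\pi'Q=0$ through the resolvent $R_\alpha=(\alpha I-Q)^{-1}$ of the — honest, by non-explosiveness — minimal semigroup: from $\pi'Q\le0$ and $R_\alpha\ge0$ one gets $\pi'=\pi'(\alpha I-Q)R_\alpha\ge\alpha\,\pi'R_\alpha$, and since $\alpha R_\alpha$ is a stochastic kernel the two sides are probabilities of equal total mass, forcing $\pi'=\alpha\pi'R_\alpha$, i.e.\ $\pi'Q=0$ — with the same finiteness of $\sum_x\pi'(x)\lambda(x)$ being what has to be secured to make this manipulation rigorous.
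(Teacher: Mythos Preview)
Your proposal is correct and follows the paper's own approach: the paper's proof is a one-line sketch stating that the argument is ``essentially identical to that of \autoref{thm-1},'' with tightness coming from the bound $\pi_n r\le b$ via \citet{glynnBoundingStationaryExpectations2008}. You are in fact more careful than the paper on two fronts---you rightly insist on $Q_n(x,x)\ge Q(x,x)$ (a condition the paper omits from its definition of augmentation, yet without which the analog of \eqref{eq25}, and hence the theorem itself, fails: one could otherwise add arbitrarily large rates toward a boundary state of $A_n$ and force $\pi_n$ to escape), and you flag the passage from $\pi'Q\le0$ to $\pi'Q=0$ when the jump rates are unbounded, a point the paper's sketch does not address.
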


The proof of \autoref{thm6} is essentially identical to that of \autoref{thm-1}. The key observation is that $\pi_nr\leq b$ for $n\geq 1$, (from \citet{glynnBoundingStationaryExpectations2008}) so that $(\pi_n:n\geq 1)$ is again tight. Continuous-time analogs to all our other main results can be similarly derived.

\bibliographystyle{apalike}
\bibliography{GeneralConvergence}

\end{document}